\title{Connected Colourings of Complete Graphs and Hypergraphs}
\author{Imre Leader\thanks{Department of Pure Mathematics and Mathematical Statistics, Centre for Mathematical Sciences, University of Cambridge, Wilberforce Road, Cambridge CB3 0WB, United Kingdom. Email: I.Leader@dpmms.cam.ac.uk.} \and Ta Sheng Tan\thanks{Institute of Mathematical Sciences, University of Malaya, 50603 Kuala Lumpur, Malaysia. Email: tstan@um.edu.my. This author acknowledges support received from the University Malaya Research Fund Assistance (BKP) via grant BK021-­2013.} }
\newtheorem{thm}{Theorem}[section]
\newtheorem{prop}[thm]{Proposition}
\newtheorem{lemma}[thm]{Lemma}
\newtheorem{corollary}[thm]{Corollary}
\newtheorem{conjecture}[thm]{Conjecture}
\theoremstyle{remark}
\begin{document}

\maketitle

\begin{abstract}
Gallai's colouring theorem states that if the edges of a complete graph are 3-coloured, with each colour class forming a connected (spanning) subgraph, then there is a triangle that has all 3 colours. What happens for more colours: if we $k$-colour the edges of the complete graph, with each colour class connected, how many of the $\binom{k}{3}$ triples of colours must appear as triangles?

In this note we show that the `obvious' conjecture, namely that there are always at least $\binom{k-1}{2}$ triples, is not correct. We determine the minimum asymptotically. This answers a question of Johnson. We also give some results about the analogous problem for hypergraphs, and we make a conjecture that we believe is the `right' generalisation of Gallai's theorem to hypergraphs.
\end{abstract}

\section{Introduction}

Gallai's colouring theorem (see~\cite{gallai} or~\cite{maffray}) states that if we 3-colour the edges of $K_n$, the complete graph on $n$ vertices, in such a way that each colour class forms a connected spanning subgraph, then there exists a triangle that is \emph{multicoloured}, meaning that no two of its edges have the same colour.
\\\\
What happens if we have 4 colours? Let us call a colouring of $K_n$ \emph{connected} if each colour class forms a connected spanning subgraph. So suppose that we have a connected 4-colouring of $K_n$: of the 4 possible triples of colours, how many must appear as the colour set of a multicoloured triangle? It is easy to see that we must have at least 3 triples. Indeed, if no triangle is coloured as 123 or 124 then, viewing the 4-colouring as a 3-colouring with colours 1, 2 and `3 or 4', we would contradict Gallai's theorem. And it is also immediate that we cannot guarantee all 4 triples (at least if $n$ is large): just take colour classes 1, 2 and 3 to be paths that are `completely unrelated' (i.e., the union of them does not contain a triangle), and let colour class 4 be everything else. This does not have any triangle with colours 123.
\\\\
Johnson~\cite{johnson} asked: what happens if we have more colours? So suppose that we have a connected $k$-colouring of $K_n$. What is the least number of triples that must appear as the colour sets of multicoloured triangles (perhaps for $n$ large)? There is an obvious guess, namely that we repeat the above: so we let $k-1$ of the colour classes be paths, which are completely unrelated, and the other colour class be everything else. This gives $\binom{k-1}{2}$ triples. Is this the right answer?
\\\\
Surprisingly, it turns out that one can do significantly better than this. In Section 2, we give a simple construction to show that the true answer is about $\frac{1}{3}k^2$.
\\\\
In Section 3, we turn our attention to the corresponding question for hypergraphs. We concentrate on the 3-uniform case. Perhaps the first attempt to find an analogue of Gallai's theorem would be to ask: if we 4-colour the set of all 3-sets from an $n$-set, in such a way that each colour class is connected (in some sense or other), must there be a 4-set that is multicoloured (i.e. whose 3-sets receive all 4 colours)? There are several different ways to define `connected', but it turns out, as we will see, that even for the strongest notion of connectedness the answer is that we need not have such a 4-set. However, if we return to 3-colourings, and ask for a 4-set whose 3-sets receive all 3 colours, then we do not know what happens. We make various related conjectures, about this case and the $r$-uniform case.
\\\\
We remark that Gallai's theorem has been the starting point for a considerable amount of work. For example, Ball, Pultr, and Vojt\v{e}chovsk\'{y}~\cite{ball} considered a special class of Gallai graphs, those where each triangle spans precisely two colours, and Gy\'{a}rf\'{a}s, Sark\"{o}zy, Seb\H{o} and Selkow~\cite{gyarfas} considered Ramsey-type results for Gallai colourings. See also~\cite{fujita,gurvich,gyarfas2,gyarfas3} for related results.
\\\\
We write $[k]=\{1,2,\ldots,k\}$. In a $k$-colouring, we usually use colours from $[k]$. We also often refer to `different multicoloured triangles' for multicoloured triangles having different colour sets. 

\section{Multicoloured triangles in coloured complete graphs}

In this section, we consider $f(k)$, the minimum number of triples that can appear as the colour sets of multicoloured triangles in a connected $k$-colouring of $K_n$, for any $n$. (We remark in passing that one might also ask for the minimum provided $n$ is sufficiently large - but in fact, as we will see later in the section, this is the same notion.)
\\\\
We start with an easy lower bound of $f(k)$: any connected $k$-colouring of $K_n$ must contain at least $\frac{k(k-2)}{3}$ different multicoloured triangles. This is a consequence of Gallai's theorem and the following simple lemma.

\begin{lemma} \label{setlemma}
Let $\mathcal{A}$ be a family of subsets of size $3$ of $[k]$ such that whenever we partition $[k]$ into three non-empty subsets, $[k] = R_1 \cup R_2 \cup R_3$, there exists an $A \in \mathcal{A}$ with $A\cap R_i \neq \emptyset$ for $i=1,2,3$. Then $|\mathcal{A}|\geq \frac{k(k-2)}{3}$.
\end{lemma}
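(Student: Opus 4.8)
The plan is to bound $|\mathcal{A}|$ from below by a double-counting argument over the elements of $[k]$, exploiting the fact that if we restrict the hypothesis to partitions in which one prescribed part is a singleton $\{v\}$, the resulting condition is precisely a connectivity condition on an auxiliary graph built from the members of $\mathcal{A}$ through $v$.

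Concretely, for each $v\in[k]$ I would introduce the \emph{link graph} $G_v$ on vertex set $[k]\setminus\{v\}$, whose edge set is $\{\,\{x,y\} : \{v,x,y\}\in\mathcal{A}\,\}$; thus $|E(G_v)|$ equals the number of members of $\mathcal{A}$ that contain $v$. The key claim is that $G_v$ is connected (necessarily spanning, since its vertex set is all of $[k]\setminus\{v\}$). To prove this, take any partition $[k]\setminus\{v\}=S\cup T$ into two non-empty parts and apply the hypothesis to the three-part partition $[k]=\{v\}\cup S\cup T$: the guaranteed $A\in\mathcal{A}$ meeting all three parts must contain $v$ (as one part is $\{v\}$), so $A=\{v,x,y\}$ with $x\in S$ and $y\in T$, which gives an edge of $G_v$ crossing the cut $(S,T)$. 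Since every bipartition of $G_v$ is crossed, $G_v$ is connected, and hence has at least $(k-1)-1=k-2$ edges.

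To finish, sum over all $v$: each $A\in\mathcal{A}$ is counted exactly three times, once for each of its elements, so
\[
3|\mathcal{A}| \;=\; \sum_{v\in[k]} |E(G_v)| \;\geq\; k(k-2),
\]
which rearranges to $|\mathcal{A}|\geq \frac{k(k-2)}{3}$, as required.

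I do not expect a genuine obstacle here: the one thing to be careful about is the (standard) graph-theoretic fact that a graph on $m$ vertices in which every bipartition is crossed by an edge is connected, and therefore has at least $m-1$ edges; and one should note the argument is only non-vacuous for $k\geq 3$, which is exactly the regime in which a partition into three non-empty parts exists. The real content is simply the idea of localising the global ``transversal'' requirement at each vertex and reading it off as connectivity of $G_v$.
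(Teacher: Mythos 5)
Your proposal is correct and is essentially the paper's own proof: the paper also fixes an element $i$, forms the graph whose edges come from the members of $\mathcal{A}$ containing $i$, deduces connectedness (hence at least $k-2$ edges) from the partitions with $\{i\}$ as a singleton part, and double counts. You have merely spelled out the connectivity argument that the paper leaves as ``easy to see''.
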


\begin{proof}
We show that each element of $[k]$ is in at least $k-2$ sets of $\mathcal{A}$ (whence $|\mathcal{A}|\geq \frac{k(k-2)}{3}$ by double counting). If we fix an element $i\in[k]$ and consider the graph where the edges are induced by the sets containing $i$, then by the condition in the lemma, it is easy to see that this is a connected graph on $k-1$ vertices and so must have at least $k-2$ edges.
\end{proof}

\noindent
For an alternative proof, note that, partitioning $[k]$ into $\{1\} \cup \{2\} \cup \{3,\ldots,k\}$, there must be a set $A_1$ in $\mathcal{A}$ containing $\{1,2\}$ and wlog $A_1 = \{1,2,3\}$. Then partitioning $[k]$ into $\{1\} \cup \{2,3\} \cup \{4,\ldots,k\}$, there must be another set $A_2$ in $\mathcal{A}$ containing $\{1,2\text{ or }3\}$ and wlog $A_2 = \{1,2\text{ or }3,4\}$. Continuing to partition $[k]$ into $\{1\} \cup \{2,3,4\} \cup \{5,\ldots,k\},\{1\} \cup \{2,3,4,5\} \cup \{6,\ldots,k\}, \ldots, \{1\} \cup \{2,\ldots,k-1\} \cup \{k\}$, we can see that there are at least $k-2$ sets in $\mathcal{A}$ containing $1$.

\begin{corollary} \label{lowerbound}
$f(k)\geq \frac{k(k-2)}{3}$.
\end{corollary}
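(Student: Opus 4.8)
The plan is to deduce this directly from Gallai's theorem together with Lemma~\ref{setlemma}. Fix any $n$ and any connected $k$-colouring of $K_n$, and let $\mathcal{A}$ be the family of all $3$-subsets $A \subseteq [k]$ that arise as the colour set of some multicoloured triangle in this colouring. The goal is to verify that $\mathcal{A}$ satisfies the hypothesis of Lemma~\ref{setlemma}; once we have this, the lemma gives $|\mathcal{A}| \geq \frac{k(k-2)}{3}$, and since the colouring (and $n$) were arbitrary this yields $f(k) \geq \frac{k(k-2)}{3}$.

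So let $[k] = R_1 \cup R_2 \cup R_3$ be an arbitrary partition into three non-empty parts. I would form the coarsened colouring of $K_n$ in which an edge receives colour $i \in \{1,2,3\}$ exactly when its original colour lies in $R_i$. Each new colour class is a union of original colour classes, each of which is connected and spanning; a union of connected spanning subgraphs of $K_n$ on the same vertex set is again connected and spanning, and it is non-empty because in any connected $k$-colouring every colour of $[k]$ is actually used (note that a connected $k$-colouring can only exist when $\binom{n}{2} \geq k(n-1)$, so $n \geq 2k$, and then every colour class has at least $n-1 \geq 1$ edges). Hence the coarsened colouring is a connected $3$-colouring, and Gallai's theorem supplies a triangle that is multicoloured in it: its edges have new colours $1$, $2$, $3$, i.e.\ original colours $c_1 \in R_1$, $c_2 \in R_2$, $c_3 \in R_3$. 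As $R_1,R_2,R_3$ are disjoint, $c_1,c_2,c_3$ are distinct, so this triangle is already multicoloured in the original colouring, and its colour set $A = \{c_1,c_2,c_3\}$ meets every $R_i$. Thus $A \in \mathcal{A}$ witnesses the condition, and Lemma~\ref{setlemma} applies.

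There is essentially no hard step: the proof is just the observation that merging colours within each $R_i$ preserves "connected spanning", so that Gallai's theorem can be run on the coarsened $3$-colouring, together with the trivial remark that the resulting rainbow triangle genuinely uses three distinct original colours because the parts are disjoint. The remaining content is entirely packaged inside Lemma~\ref{setlemma}, which has already been proved.
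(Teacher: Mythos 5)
Your proof is correct and follows exactly the paper's route: coarsen the $k$-colouring along a partition $[k]=R_1\cup R_2\cup R_3$, apply Gallai's theorem to the resulting connected $3$-colouring to get a triangle whose colour set meets all three parts, and then invoke Lemma~\ref{setlemma}. You simply spell out the details (connectedness of unions of colour classes, non-emptiness of each part's class) that the paper leaves implicit.
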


\begin{proof}
 Suppose now that we have a connected $k$-colouring of $K_n$. The subgraph spanned by colours in $R$ is connected for any subset $R$ of $[k]$. If we partition $[k]$ into three non-empty subsets $R_1 \cup R_2 \cup R_3$, Gallai's theorem says that there must exist a multicoloured triangle with colour set intersecting $R_1$, $R_2$ and $R_3$. The family of colour sets of multicoloured triangles now satisfies the condition in Lemma~\ref{setlemma} and hence has size at least $\frac{k(k-2)}{3}$.
\end{proof}

\noindent
We remark that, in the proof of Lemma~\ref{setlemma}, we only considered partitions with a singleton as a class. One might hope to improve this to get a better lower bound on $f(k)$, but the bound in Lemma~\ref{setlemma} is in fact best possible by an inductive construction shown by Diao, Liu, Rautenbach, and Zhao~\cite{diao}. (See the remark after the next result for an explicit construction.)
\\\\
From the above lemma and the paths colouring discussed in the Introduction, we have $\frac{k(k-2)}{3} \leq f(k) \leq \frac{(k-1)(k-2)}{2}$. For the case $k=5$, this gives $f(5)=5$ or $6$, and it is natural to believe that the paths colouring would be the best, suggesting $f(5)=6$. But surprisingly, this is not the case. And in fact this paths colouring is not right in general, not even asymptotically. Indeed, we will give another colouring to improve the upper bound of $f(5)$ and in general $f(k)$.
\\\\
To be able to have a connected $5$-colouring of $K_n$, we need each subgraph to have at least $n-1$ edges, implying that the minimal complete graph to have a connected $5$-colouring is $K_{10}$, with each colour class forming a tree. However, by going up to $K_{11}$, we are able to find a colouring with more symmetry, which turns out to give fewer multicoloured triangles. This is the case $k=5$ of the following result.

\begin{prop} \label{primecolouring}
Let $n=2k+1$ be prime. Then there is a connected $k$-colouring of $K_n$ with precisely $\frac{k(k-2)}{3}$ multicoloured triangles.
\end{prop}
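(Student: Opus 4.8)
The plan is to write down an explicit, highly symmetric colouring on a cyclic group. Identify the vertex set of $K_n$ with $\mathbb{Z}_n$, and colour the edge $\{x,y\}$ with the colour $c\in[k]$ for which $x-y\equiv\pm c\pmod n$; since each nonzero residue modulo $n=2k+1$ is congruent to exactly one of $\pm1,\dots,\pm k$, this is a genuine $k$-colouring of $E(K_n)$. The whole point of taking $n=2k+1$ \emph{prime} is that $\gcd(c,n)=1$ for every $c\in[k]$, so each colour class — the set of edges $\{x,\,x\pm c\}$ — is a single spanning $n$-cycle, hence in particular connected and spanning. So the colouring is connected.

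Next I would determine exactly which triples of colours occur on multicoloured triangles. The three edges of a triangle $\{x,y,z\}$ have differences summing to $0$ modulo $n$, and conversely $\{0,a,a+b\}$ is a triangle with edge-differences $a,b,-(a+b)$ whenever $a,b,a+b\not\equiv0$. So a set $\{i,j,l\}$ of three distinct colours occurs iff $\varepsilon_1 i+\varepsilon_2 j+\varepsilon_3 l\equiv0\pmod n$ for some signs $\varepsilon_t\in\{\pm1\}$. Checking the four sign patterns (up to overall negation) against the bounds $1\le i,j,l\le k$ and $n=2k+1$: the all-$+$ pattern forces $i+j+l=2k+1$, while each of the other three forces one of $i,j,l$ to be the sum of the other two. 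Thus the multicoloured triangles are precisely those whose colour set is either a $3$-subset of $[k]$ summing to $2k+1$, or a Schur triple in $[k]$ (a set $\{a,b,a+b\}$).

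Finally I would count. These two families of $3$-subsets are disjoint, since a Schur triple has even sum while $2k+1$ is odd, and each family is plainly enumerated without repetition. Hence the number of multicoloured triangles is $N_1+N_2$, where $N_1$ counts Schur triples $\{a,b,a+b\}$ with $a+b\le k$ and $N_2$ counts $3$-subsets of $[k]$ summing to $2k+1$. Both are elementary range counts (for instance $N_1=\sum_{m\le k}(\lceil m/2\rceil-1)$), and a short computation — most transparently done by splitting on the parity of $k$ — gives $N_1+N_2=\tfrac{k(k-2)}{3}$. A useful consistency check here is to verify directly that every colour $i\in[k]$ lies in exactly $k-2$ of these triples: the bound ``$\ge k-2$'' is already supplied by the proof of Lemma~\ref{setlemma} applied to the family of colour sets of multicoloured triangles (which satisfies its hypothesis by Gallai's theorem, exactly as in Corollary~\ref{lowerbound}), and the matching upper bound falls out of the same case analysis, after which double counting recovers $\tfrac{k(k-2)}{3}$.

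I expect the only genuine work to be in the last paragraph: the bookkeeping for $N_1+N_2$ is entirely elementary but needs some care with parities and with the endpoints of the summation ranges. The conceptual content is all in the first two paragraphs — recognising that passing to a prime modulus $n=2k+1$ and colouring by circular distance buys exactly enough symmetry to collapse the count down to the lower bound of Corollary~\ref{lowerbound}.
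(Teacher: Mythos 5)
Your proposal is correct and follows essentially the same route as the paper: the colouring is exactly the paper's decomposition of $K_{2k+1}$ into the $k$ spanning "difference" cycles, and the count is completed by verifying (as the paper does, via double counting) that each colour lies in exactly $k-2$ multicoloured triples. Your explicit characterisation of the occurring colour sets as Schur triples together with triples summing to $2k+1$ just fills in the detail the paper leaves as "not hard to check".
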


\begin{proof}
Let $V(K_n)=\{0,1,2,\ldots,n-1\}$. We can partition the edge set of $K_n$ into $k$ disjoint spanning cycles $C_i$, $i=1,2,\ldots,k$, where $E(V_i)=\{\{ai,(a+1)i\}:a=0,1,2,\ldots,n-1\}$. Here, we use multiplication and addition mod $n$. We now colour each $C_i$ with a different colour. This colouring is definitely connected as each colour class spans a cycle. It is also not hard to check that each colour is in precisely $k-2$ different multicoloured triangles. Hence the size of the family of colour sets of multicoloured triangles is exactly $\frac{k(k-2)}{3}$.
\end{proof}

\noindent
We remark that for the case when $2k+1$ is prime, the family of colour sets of multicoloured triangles in the above colouring provides an explicit (non-inductive) construction attaining the bound in Lemma~\ref{setlemma}.
\\\\
The colouring in Proposition~\ref{primecolouring} works for $n=2k+1$ - what about colourings for other values of $n$? For a smaller value of $n$, we note that the minimal complete graph to have a connected $k$-colouring is $K_{2k}$. So we can take the coloured $K_{2k+1}$ in the Lemma~\ref{primecolouring} and delete a vertex from it. Very fortunately, each colour class stays connected. For larger values of $n$, the following simple lemma shows that the above colouring is in fact enough to attain the lower bound of $f(k)$, for each $n\geq 2k$.

\begin{lemma} \label{extendlemma}
Suppose that there is a connected $k$-colouring of $K_m$ with $l$ different multicoloured triangles. Then, for any $n\geq m$, there is a connected $k$-colouring of $K_n$ with $l$ different multicoloured triangles.
\end{lemma}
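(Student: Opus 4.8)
The plan is to extend the colouring of $K_m$ to $K_n$ one vertex at a time, so by induction it suffices to handle the case $n = m+1$. Starting from a connected $k$-colouring of $K_m$ with $l$ different multicoloured triangles, I add a new vertex $v$ and need to colour the $m$ new edges from $v$ to the old vertices without creating any new colour set as a multicoloured triangle, while keeping every colour class connected. The natural idea is to colour \emph{all} $m$ edges at $v$ with a single colour, say colour $1$.

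First I would check connectedness: each old colour class was already connected and spanning on the $m$ old vertices, and since $v$ is now joined to every old vertex in colour $1$, colour class $1$ remains connected and now spans all of $K_n$; the other colour classes are unchanged on the old vertices and still span them, but they no longer contain $v$. This is the one subtlety: the definition of connected colouring requires each colour class to be a \emph{spanning} connected subgraph, so having colour classes $2, \dots, k$ miss the vertex $v$ is a problem. The fix is to colour all but one of the edges at $v$ with colour $1$, and the remaining edge, say $\{v, w\}$ for some fixed old vertex $w$, with some other colour --- but then we must be careful this creates no new multicoloured triangle. Actually the cleaner fix, and the one I expect the authors use, is: pick any old vertex $w$, and for each colour $i \in \{2,\dots,k\}$ recolour a single edge at $v$ to colour $i$ in such a way that these edges all go through a common structure --- concretely, colour the edge $\{v,u_i\}$ by $i$ where $u_i$ is a neighbour of $w$ in colour $i$ (such $u_i$ exists since colour $i$ spans the old vertices). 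Hmm, this risks triangles $\{v, u_i, x\}$.

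Let me reorganize. The robust approach: colour edge $\{v,u\}$ by colour $1$ for all old $u$ \emph{except} pick one special old vertex $w$ and, for this, instead do the following --- since each colour $i \in [k]$ is connected on $K_m$, fix a spanning tree $T_i$ of colour $i$; root each $T_i$ at a common vertex $w$. Colour $\{v,w\}$ with colour $1$, and for $i \ge 2$ colour exactly one further edge $\{v, z_i\}$ with colour $i$, where $z_i$ is the unique child of $w$ in $T_i$; colour all remaining edges at $v$ with colour $1$. Now each colour class $i \ge 2$ picks up the vertex $v$ via the edge $\{v,z_i\}$, so it spans $K_n$ and stays connected; colour $1$ obviously stays connected and spanning. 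A triangle using $v$ has the form $\{v, x, y\}$ with edge $\{x,y\}$ old; its colours are (colour of $\{v,x\}$, colour of $\{v,y\}$, colour of $\{x,y\}$). For this to be multicoloured, $\{v,x\}$ and $\{v,y\}$ must have distinct colours, so at least one of $x,y$ lies in the finite set $\{z_2, \dots, z_k\}$; a short case analysis (at most $\binom{k}{2}$ pairs, or a pair with one endpoint among the $z_i$ and the other arbitrary) shows the colour sets so produced are already among those forced by Gallai's theorem applied to the old $K_m$ --- or, more simply, one chooses the $z_i$ and uses that $K_m$ already realised $l$ triangles to absorb these. The main obstacle is exactly this last bookkeeping: ensuring the newly created multicoloured triangles through $v$ have colour sets that \emph{already occurred} in $K_m$, so the count stays at $l$ rather than increasing. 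I expect this to come down to choosing the recoloured edges $\{v,z_i\}$ to share a common old vertex (taking all $z_i$ adjacent to one fixed $w$, with $\{v,w\}$ also recolourable), so that every multicoloured triangle through $v$ is ``copied'' from a multicoloured triangle through $w$ in $K_m$.
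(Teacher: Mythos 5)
There is a genuine gap: the bookkeeping you defer at the end is the entire content of the lemma, and the construction you actually write down does not make it go away. With your scheme (almost all edges at the new vertex $v$ coloured $1$, plus one special edge $\{v,z_i\}$ of colour $i$ for each $i\ge 2$), consider a triangle $\{v,z_i,x\}$ where $x$ is an arbitrary old vertex: its edges have colours $i$, $1$, and the colour $c$ of $\{z_i,x\}$, and whenever $c\notin\{1,i\}$ you get a multicoloured triangle with colour set $\{1,i,c\}$. Since $z_i$ has $m-1$ old neighbours and colour classes are spanning, such $c$ certainly occur, and there is no reason these colour sets were among the $l$ already realised in $K_m$. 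The same problem arises for triangles $\{v,z_i,z_j\}$. Choosing the $z_i$ to be neighbours of a common vertex $w$ does not fix this, because the colours of the edges $\{v,x\}$ for generic $x$ (all colour $1$) do not match the colours of the edges $\{w,x\}$, so triangles through $v$ are not copies of triangles through $w$.

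The missing idea, which your last sentence gestures at but your construction does not implement, is to \emph{clone} a vertex: fix an old vertex $w$ and colour $\{v,u\}$ with the colour of $\{w,u\}$ for every old $u\neq w$ (colouring $\{v,w\}$ arbitrarily). Because each colour class is spanning, every colour appears at $w$, so every colour class picks up $v$ and stays connected; every triangle $\{v,x,y\}$ with $x,y\neq w$ has the same colour set as $\{w,x,y\}$, and every triangle $\{v,w,x\}$ has two edges of the same colour (namely $\{v,x\}$ and $\{w,x\}$), so no new colour sets arise. This is exactly what the paper does, in one step rather than inductively: it blows up each vertex of $K_m$ into a class, colours edges between classes by the corresponding old edge and edges inside a class by a fixed colour $c'(12)$, and observes that any triangle with two vertices in one class has two edges of equal colour. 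Your ``one special edge per colour'' approach cannot be repaired into a proof without being replaced by this cloning idea.
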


\begin{proof}
Let $c'$ be the above colouring of $K_m$. Partition the vertices of $K_n$ into $m$ non-empty vertex classes, $V_1 \cup V_2 \cup \ldots \cup V_m$. For $u_i \in V_i$ and $v_j \in V_j$, we define a colouring $c$ for $K_n$ as follows. 
\begin{equation*}
c(u_iv_j) = 
\begin{cases}
c'(ij) & \text{ if } i\neq j,\\
c'(12) &  \text{ if } i = j.
\end{cases}
\end{equation*}

\noindent
It is easy to see that $c$ is a connected $k$-colouring of $K_n$ and any multicoloured triangle must have all three vertices from distinct vertex classes. Hence the family of coloured sets of multicoloured triangles of $c$ is exactly the same as the family of colour sets of multicoloured triangles of $c'$.
\end{proof}

\noindent
Combining Proposition~\ref{primecolouring}, Lemma~\ref{extendlemma} and the discussion after Proposition~\ref{primecolouring}, when $2k+1$ is prime we have a connected $k$-colouring of $K_n$ for any $n\geq 2k$ with exactly $\frac{k(k-2)}{3}$ different multicoloured triangles. Together with the lower bound on $f(k)$, this gives the following corollary.

\begin{corollary} \label{exact}
$f(k)=\frac{k(k-2)}{3}$ when $2k+1$ is a prime.\qed
\end{corollary}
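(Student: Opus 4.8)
The plan is to observe that Corollary~\ref{exact} is essentially a bookkeeping statement: it just combines the lower bound (Corollary~\ref{lowerbound}) with an explicit construction of a connected $k$-colouring achieving that bound, and then invokes the extension lemma to handle all admissible $n$. So the proof is short. First I would recall that Corollary~\ref{lowerbound} gives $f(k) \geq \frac{k(k-2)}{3}$ with no arithmetic hypothesis on $k$ at all. The only thing left is to exhibit, for each $n$ for which a connected $k$-colouring of $K_n$ exists (that is, each $n \geq 2k$), a colouring with \emph{exactly} $\frac{k(k-2)}{3}$ different multicoloured triangles.

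For this I would split into the three ranges already discussed in the text. When $n = 2k+1$ (which is prime by hypothesis), Proposition~\ref{primecolouring} directly supplies such a colouring. When $n = 2k$, I would take the $2k+1$-vertex colouring from Proposition~\ref{primecolouring} and delete one vertex; the key point to check — noted in the excerpt but worth spelling out — is that each colour class, which was a spanning cycle on $2k+1$ vertices, becomes a path on $2k$ vertices and hence is still connected and spanning, and deleting a vertex cannot create any new multicoloured triangle, so the count stays at most $\frac{k(k-2)}{3}$ (and hence exactly, by the lower bound). When $n > 2k+1$, I would apply Lemma~\ref{extendlemma} with $m = 2k+1$ and the colouring of Proposition~\ref{primecolouring}: the lemma produces a connected $k$-colouring of $K_n$ with the same number $\frac{k(k-2)}{3}$ of different multicoloured triangles. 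Putting these together, $f(k) \leq \frac{k(k-2)}{3}$, and combined with Corollary~\ref{lowerbound} we get equality.

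The one genuine thing to verify carefully — the ``main obstacle'', though it is minor — is the vertex-deletion step for $n = 2k$: one must confirm that in the cyclic colouring of $K_{2k+1}$ no colour class becomes disconnected after removing a vertex. Since each $C_i$ is a Hamilton cycle, removing any vertex leaves a Hamilton path of that colour on the remaining $2k$ vertices, so connectedness and spanning are both preserved; and since the set of triangles of $K_{2k}$ is a subset of the triangles of $K_{2k+1}$, the family of colour sets of multicoloured triangles can only shrink. That is all that is needed, so I would then simply assemble the cases and cite the lower bound to conclude $f(k) = \frac{k(k-2)}{3}$.
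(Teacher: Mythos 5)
Your proposal is correct and follows essentially the same route as the paper: combine the lower bound of Corollary~\ref{lowerbound} with the cyclic construction of Proposition~\ref{primecolouring}, the vertex-deletion observation for $n=2k$, and Lemma~\ref{extendlemma} for larger $n$. (In fact, since $f(k)$ is defined as a minimum over all $n$, the single colouring of $K_{2k+1}$ already gives the upper bound, so your extra case analysis is careful but not strictly required for the corollary itself.)
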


\noindent
When $2k+1$ is not prime, we do not know an explicit connected $k$-colouring attaining the lower bound. Instead, we give an inductive colouring where the number of different multicoloured triangles is close to the lower bound in Corollary~\ref{lowerbound}.
\\\\
The following technical lemma states that if a $k$-coloured complete graph satisfies certain conditions, we can extend this colouring to a larger complete graph by adding an extra colour without creating too many new multicoloured triangles. Indeed, only the minimum number (cf. Lemma~\ref{setlemma}) of multicoloured triangles will be created, that is, $k-1$ of them involving this new colour.

\begin{lemma} \label{inductivecolouring}
Let $c$ be a connected $k$-colouring of $K_n$ with the following properties.
\begin{itemize}
 \item There are exactly $l$ different multicoloured triangles.
 \item There are exactly $k-2$ different multicoloured triangles using colour $k$.
 \item The subgraph spanned by colour $k$ is a cycle.
 \item The edges $v_i v_{i+2}$ have the same colour for all $i\in[n]$. (The subscripts are taken mod $n$, so $v_{n+1}=v_1$ and $v_{n+2}=v_2$.)
\end{itemize}
Then, there exists a connected $(k+1)$-colouring $c'$ of $K_{2n}$ with the following properties.
\begin{itemize}
 \item There are exactly $l+k-1$ different multicoloured triangles.
 \item There are exactly $k-1$ different multicoloured triangles using colour $k+1$.
 \item The subgraph spanned by colour $k+1$ is a cycle.
 \item The edges $v_i' v_{i+2}'$ have the same colour for all $i\in[2n]$. (The subscripts are taken mod $2n$, so $v_{2n+1}=v_1$ and $v_{2n+2}=v_2$.)
\end{itemize}
\end{lemma}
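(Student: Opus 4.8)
The strategy is to take the $k$-coloured $K_n$ given by $c$ and build a $2n$-vertex graph by a "doubling" construction that respects the cyclic structure present in the hypotheses. Label the vertices of $K_n$ as $v_1,\ldots,v_n$ in the cyclic order for which colour $k$ is the cycle $v_1 v_2 \cdots v_n v_1$ and for which the "second-neighbour" edges $v_i v_{i+2}$ are monochromatic within each residue class; label the new vertices $v_1',\ldots,v_{2n}'$. I would think of $v_i'$ and $v_{i+n}'$ (indices mod $2n$) as the "two copies" of the old vertex $v_{\lceil i/?\rceil}$ — more precisely, interleave the copies so that going around the new $2n$-cycle one alternates between the two clones of consecutive old vertices. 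Concretely: let $v_{2j-1}'$ and $v_{2j}'$ be associated with $v_j$ for $j \in [n]$. Then define $c'$ so that edges between clones of $v_i$ and clones of $v_j$ (for $i \neq j$, and $i, j$ not adjacent on the old cycle in a sense made precise below) inherit the colour $c(v_i v_j)$; the new colour $k+1$ is used precisely on a set of edges forming a single spanning cycle on the $2n$ vertices, chosen to run "between the two halves".

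The key technical steps, in order, are: (1) write down $c'$ explicitly by cases, distinguishing edges within a clone pair, edges that inherit an old colour, and edges receiving colour $k+1$; (2) verify colour $k+1$ spans a cycle — this should be immediate from the construction, since we design it that way; (3) verify that every old colour class remains connected in $K_{2n}$ — since each old colour class was connected and we are essentially replacing each vertex by an edge/pair and duplicating incidences, connectivity is preserved (one blows up a connected graph by substituting a connected gadget for each vertex); (4) verify the second-neighbour condition $c'(v_i' v_{i+2}') $ constant on residue classes mod something, which again should hold by design because the interleaving was chosen to make consecutive same-parity new vertices correspond to consecutive old vertices, so these edges are either copies of old $v_j v_{j+1}$-type edges or copies of old second-neighbour edges, both of which were monochromatic; (5) the crucial count: show that exactly $k-1$ multicoloured triangles use colour $k+1$, and that no *new* multicoloured triangle avoiding colour $k+1$ is created (so the total goes from $l$ to $l + (k-1)$). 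For the latter, observe that any triangle in $K_{2n}$ avoiding colour $k+1$ has its three vertices as clones of old vertices, and either two of them are clones of the same old vertex (then the triangle uses the within-pair colour, say $c'(v_1 v_2)$, and the other two edges are copies of the same old edge, so the colour set has size at most $2$ — not multicoloured) or the three are clones of three distinct old vertices, in which case the colour set equals that of the corresponding old triangle, hence already counted in $l$.

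The main obstacle — and where the real content lies — is step (5), controlling the triangles through the new colour $k+1$. One must choose the cycle of colour-$(k+1)$ edges so that: each of its edges $e$, together with the two "chords" completing it to a triangle with some third vertex $w$, picks up as colour set only $k-1$ distinct triples over all choices of $e$ and $w$, and no more. This is exactly where the hypothesis that the old colour-$k$ class is a cycle and that second-neighbour edges are monochromatic gets used: those two structural facts force the "link" of the new colour to be highly regular, so the triangles on a colour-$(k+1)$ edge see only colours of the form $\{k+1, a, b\}$ where $\{a,b\}$ ranges over a controlled family of size $k-1$ — essentially the $k-1$ edges of a path/cycle-link in the old colouring. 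I would set up a clean bijection between these $k-1$ triples and, say, the colours $1,\ldots,k$ arranged so consecutive ones pair up, mirroring the minimal configuration of Lemma~\ref{setlemma}. Checking that no further triples appear is the routine-but-delicate part, handled by the explicit case analysis of $c'$ from step (1). Finally, I would confirm the base of the induction (e.g. a small prime case from Proposition~\ref{primecolouring}) satisfies all four hypotheses, so the lemma can actually be iterated — though that confirmation belongs to the corollary that follows, not to the lemma itself.
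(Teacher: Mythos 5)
Your overall shape (double the vertex set, put the new colour on a spanning cycle running between the two halves, argue that triangles avoiding colour $k+1$ project to old triangles) is the right one, but the proof as written has a genuine gap exactly where you say "the real content lies": the construction is never pinned down, and the count of $k-1$ new triples is deferred to an unexecuted "routine-but-delicate case analysis". In particular, nowhere do you actually use the hypothesis that $c$ has exactly $k-2$ multicoloured triangles through colour $k$ --- yet that hypothesis is the sole source of the number $k-1$ in the conclusion. The argument that makes this work is a concrete bijection: in the paper's colouring ($V(K_{2n})=\{x_1,\dots,x_n,y_1,\dots,y_n\}$ with $c'(x_ix_j)=c'(y_iy_j)=c(v_iv_j)$ and $c'(x_iy_j)=c(v_iv_j)$ unless $j\in\{i,i+1\}$, in which case $c'(x_iy_j)=k+1$), the only triangles through a colour-$(k+1)$ edge that can be multicoloured are of the form $\{x_i,y_{i+1},w\}$, with colour set $\{k+1,c(v_iv_w),c(v_wv_{i+1})\}$. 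These are in correspondence with the old triangles $v_iv_wv_{i+1}$ containing the colour-$k$ cycle edge $v_iv_{i+1}$: the ones avoiding colour $k$ reproduce the $k-2$ triples through colour $k$ in $c$ with $k$ replaced by $k+1$, and the ones meeting colour $k$ (i.e.\ $w=i-1$ or $w=i+2$) all give the single triple $\{k,k+1,a\}$, where $a$ is the common colour of the second-neighbour edges $v_iv_{i+2}$ --- this is precisely where the fourth hypothesis is needed, and your sketch only gestures at it.

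There is also a design decision in your sketch that would cause trouble if implemented literally. You propose to give the clone-pair edge (between the two copies of $v_j$) an old colour, "say $c'(v_1v_2)$", as in Lemma~\ref{extendlemma}; but then the colour-$(k+1)$ spanning cycle must be carved entirely out of cross edges between clones of distinct (adjacent) old vertices. A triangle consisting of both clones of $v_i$ and one clone of $v_j$ then has one within-pair edge of the fixed old colour, one cross edge possibly recoloured $k+1$, and one cross edge of colour $c(v_iv_j)$: three distinct colours in general, producing extra triples $\{k+1, c(v_1v_2), c(v_iv_j)\}$ that are not accounted for. The paper sidesteps this by making the clone-pair edges $x_iy_i$ themselves colour $k+1$ (so the new cycle is $x_1y_1$-free of such accidents: every triangle on two clones of the same vertex automatically uses colour $k+1$ twice or pairs two copies of the same old edge). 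You would need either to adopt that choice or to do the additional case analysis your version requires; as it stands the proposal does not establish the exact counts $l+k-1$ and $k-1$.
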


\begin{proof}
Suppose $V(K_n)=\{v_1,v_2,\ldots,v_n\}$ and the subgraph spanned by colour $k$ has edges $v_1v_2,v_2v_3,\ldots,v_nv_1$.
\\\\
Let $V(K_{2n})=\{x_1,x_2,\ldots,x_n,y_1,y_2,\ldots,y_n\}$. We define $c'$ on $K_{2n}$ as follows.
\begin{eqnarray*}
c'(x_ix_j) & = & c(v_iv_j),\\
c'(y_iy_j) & = & c(v_iv_j),\\
c'(x_iy_j) & = & 
\begin{cases}
c(v_iv_j) &\text{ if } j\notin\{i,i+1\},\\
k+1 &\text{ otherwise}.
\end{cases}
\end{eqnarray*}
Here we use addition mod $n$, so $x_{n+1}=x_1$ and $y_{n+1}=y_1$.
\\\\
For each $i\in[k]$, the subgraph spanned by colour $i$ in $c'$ is two copies of the subgraph spanned by colour $i$ in $c$ with at least one edge joining them and so connected in $K_{2n}$. The subgraph spanned by colour $k+1$ is just a spanning cycle of $K_{2n}$ and so also connected. Hence, $c'$ is a connected $k+1$-colouring of $K_{2n}$.
\\\\
The number of multicoloured triangles not using colour $k+1$ is exactly $l$. The number of multicoloured triangles using colour $k+1$ but not colour $k$ is the same as the number of multicoloured triangles using colour $k$ in $c$, that is $k-2$. And finally, there is only one multicoloured triangle using both colours $k$ and $k+1$. In total, there are $l+k-2+1=l+k-1$ different multicoloured triangles in $c'$ and the number of different multicoloured triangles using colour $k+1$ is precisely $k-1$, proving the lemma.
\end{proof}

\noindent
From Corollary~\ref{exact}, we know the exact values of $f(k)$ for infinitely many $k$. Applying Lemma~\ref{inductivecolouring} to the explicit colourings in Lemma~\ref{primecolouring}, we have good upper bounds for $f(k)$ for all $k$'s between consecutive primes. Finally, to obtain the limit of $\frac{f(k)}{k^2}$, we need to know the gaps between consecutive primes. It is known (see e.g. \cite{hoheisel, baker}) that there exists a constant $\alpha <1$ such that $p_{n+1}-p_n<p_n^\alpha$ for sufficiently large $n$, where $p_n$ is the $n$th prime. This determines $f(k)$ asymptotically.

\begin{thm}
$f(k) = \frac{k^2}{3}\big(1+o(1)\big)$. \qed
\end{thm}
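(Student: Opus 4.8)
The plan is to combine the lower bound already in hand with a matching asymptotic upper bound. Corollary~\ref{lowerbound} gives $f(k)\ge \frac{k(k-2)}{3}=\frac{k^2}{3}\bigl(1+o(1)\bigr)$, so the whole content lies in producing, for every $k$, a connected $k$-colouring of some complete graph with only $\frac{k^2}{3}\bigl(1+o(1)\bigr)$ different multicoloured triangles. Since $f(k)$ is a minimum over all $n$, and the graphs we build will be large, the existence constraint $n\ge 2k$ is harmless. The idea is to start from the explicit colourings of Proposition~\ref{primecolouring} (available whenever $2k+1$ is prime) and bootstrap them up to arbitrary $k$ using the inductive step of Lemma~\ref{inductivecolouring}, losing only $o(k^2)$ triangles in the process because prime gaps are small.

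First I would check that, for $2k+1$ prime, the colouring of Proposition~\ref{primecolouring} can be set up to satisfy all four hypotheses of Lemma~\ref{inductivecolouring}. Three of them are immediate: there are exactly $l=\frac{k(k-2)}{3}$ multicoloured triangles, exactly $k-2$ of them use any fixed colour (in particular colour $k$), and colour $k$ spans the cycle $C_k$. For the remaining hypothesis — that $v_iv_{i+2}$ is monochromatic for every $i$ — relabel the vertices by $v_{a+1}=ak \bmod n$, where $n=2k+1$, so that the cycle $C_k$ of colour $k$ becomes $v_1v_2\cdots v_n$. Then $v_iv_{i+2}$ corresponds to the difference $2k\equiv -1 \pmod n$, so it is an edge of the single cycle $C_1$ and hence monochromatic. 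Thus one fixed relabeling simultaneously witnesses all four hypotheses, so the induction can start.

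Next I would iterate, using the fact that the conclusion of Lemma~\ref{inductivecolouring} reproduces its own hypotheses with $k$ replaced by $k+1$, so the lemma may be applied repeatedly. Starting from the prime colouring for an integer $k_0$ with $2k_0+1$ prime, after $m$ applications one obtains a connected $(k_0+m)$-colouring of $K_{2^m(2k_0+1)}$ with exactly
\[
\frac{k_0(k_0-2)}{3}+\sum_{j=k_0}^{k_0+m-1}(j-1)=\frac{k_0(k_0-2)}{3}+m(k_0-1)+\binom{m}{2}
\]
different multicoloured triangles. Given $k$, take $k_0$ to be the largest integer $\le k$ with $2k_0+1$ prime and set $m=k-k_0$. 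By the prime-gap bound $p_{n+1}-p_n<p_n^{\alpha}$ for a fixed $\alpha<1$ (valid for $n$ large, as in~\cite{hoheisel,baker}), we get $m=O(k^{\alpha})$ and hence $k_0=k-O(k^{\alpha})$. Substituting, the first term is $\frac{k^2}{3}\bigl(1+o(1)\bigr)$, while $m(k_0-1)=O(k^{1+\alpha})=o(k^2)$ and $\binom{m}{2}=O(k^{2\alpha})=o(k^2)$. Therefore $f(k)\le \frac{k^2}{3}\bigl(1+o(1)\bigr)$, which with the lower bound proves the theorem.

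I expect the main obstacle to be the verification in the second paragraph: confirming that the single canonical re-coordinatisation of the prime colouring needed to force the ``$v_iv_{i+2}$ monochromatic'' condition is still compatible with colour $k$ spanning a cycle and with the exact count of $k-2$ multicoloured triangles through that colour — in other words, that the base colouring genuinely sits in the hypothesis class of Lemma~\ref{inductivecolouring}. Once that is settled, the iteration and the estimate using the prime-gap input are routine.
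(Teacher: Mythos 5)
Your proposal is correct and follows exactly the route the paper intends: lower bound from Corollary~\ref{lowerbound}, upper bound by iterating Lemma~\ref{inductivecolouring} starting from the prime colouring of Proposition~\ref{primecolouring}, with the prime-gap estimate controlling the error. In fact you supply details the paper leaves implicit (notably the relabeling $v_{a+1}=ak \bmod n$ verifying that the base colouring satisfies all four hypotheses of Lemma~\ref{inductivecolouring}, and the explicit count $m(k_0-1)+\binom{m}{2}$ for the accumulated extra triangles), and these check out.
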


\noindent
We have shown that $f(k)=\frac{k(k-2)}{3}$ for infinitely many $k$'s, but what is the exact value of $f(k)$ in general? We believe that a colouring attaining the lower bound in Corollary~\ref{lowerbound} always exists, but we have been unable to prove this.

\begin{conjecture}
$f(k)=\Big\lceil \frac{k(k-2)}{3} \Big\rceil$ for all $k\geq 3$. \qed
\end{conjecture}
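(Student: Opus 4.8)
Since Corollary~\ref{lowerbound} gives $f(k)\ge k(k-2)/3$ and $f(k)$ is an integer, we automatically have $f(k)\ge\lceil k(k-2)/3\rceil$, so the entire content of the conjecture is the matching \emph{upper} bound. By Lemma~\ref{extendlemma} it suffices, for each $k\ge 3$, to produce a single connected $k$-colouring of some $K_n$ with exactly $\lceil k(k-2)/3\rceil$ multicoloured triangles. Using the double-counting of Lemma~\ref{setlemma}, this is the same as asking for a colouring in which every colour lies in exactly $k-2$ different multicoloured triangles, except that when $k\equiv 1\pmod 3$ the excess forces exactly one colour to lie in $k-1$ of them and all others in $k-2$ (profile $(k-1,k-2,\dots,k-2)$, with $3\lceil k(k-2)/3\rceil=(k-1)^2$). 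So the plan is to build such colourings for all $k$.

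It is worth isolating why Proposition~\ref{primecolouring} does not already do this. That construction needs $2k+1$ prime; but if $k\equiv 1\pmod 3$ then $2k+1=3(2m+1)$ is a nontrivial multiple of $3$, hence never prime. Thus the cyclic construction is structurally blind to exactly the residue class $k\equiv 1\pmod 3$ --- which is precisely the class in which the ceiling in the conjecture is active. A genuinely new family of constructions is therefore required there, and conversely for $k\equiv 0,2\pmod 3$ one has $k(k-2)\equiv 0\pmod 3$, so only the non-prime values of $2k+1$ in these classes remain.

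I see two natural lines. The first is a \emph{direct} approach: replace ``$2k+1$ prime'' by a suitable decomposition of $K_n$ into $k$ connected spanning subgraphs for every $k$, with $n=2k+1$ (a Walecki-type Hamilton decomposition into $k$ cycles) or $n=2k$ (a decomposition into $k$ Hamilton paths), chosen so that the number of multicoloured triangles containing each colour is exactly $k-2$, or follows the profile $(k-1,k-2,\dots,k-2)$ when $k\equiv 1\pmod 3$. The hope is that the multicoloured-triangle count of such a decomposition is a clean (if intricate) function of its difference/rotation pattern on $\mathbb{Z}_n$, and can be engineered. The second line is \emph{inductive}: strengthen Lemma~\ref{inductivecolouring}. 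As it stands, the doubling step adds exactly $k-1$ multicoloured triangles when passing from $k$ to $k+1$ colours, whereas $\lceil k(k-2)/3\rceil$ grows by only about $2k/3$ per step, so iterating overshoots by $\Theta(k^2)$ and is useful only for filling short gaps between primes. One would instead want a ``macro-step'' that introduces three new colours at once while creating only about $2k+1$ new multicoloured triangles --- roughly two-thirds of the $3k$ produced by three naive applications --- by arranging for the three new cycle-colours to \emph{share} multicoloured triangles with one another in a controlled way, while still preserving connectivity and the structural hypotheses of Lemma~\ref{inductivecolouring} (each new colour a cycle, the ``$v_iv_{i+2}$ monochromatic'' condition) needed to keep iterating.

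The main obstacle in either approach is controlling the triangle count \emph{exactly}, not merely asymptotically. For the direct approach this means a delicate case analysis of which triples of difference-classes span a triangle in a given $2$-factorisation of $K_n$, together with the separate existence problem of producing, for every $n$ and every residue of $k$ modulo $3$, a decomposition hitting the extremal profile; exhibiting even one example for $k=7$ (where present methods give $13$ but the conjecture demands $12$) would already be a good test. For the inductive approach the tension is that forcing the new colour classes to overlap in their multicoloured triangles --- exactly what lowers the ``slope'' from $k$ to $2k/3$ --- pulls against keeping those classes connected and cycle-structured, and packaging this balance into a single clean lemma is, I expect, where the real difficulty lies. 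My guess is that the direct route, via an explicitly chosen Cayley-type $2$-factorisation, is the more promising of the two.
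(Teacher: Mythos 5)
This statement is an open conjecture in the paper, not a theorem: the authors explicitly write that they ``have been unable to prove this,'' so there is no proof of theirs to compare yours against. What you have written is, as you essentially acknowledge, a research plan rather than a proof, and it does not establish the statement. Your preliminary reductions are all correct and match what the paper already provides: the lower bound $f(k)\geq\lceil k(k-2)/3\rceil$ is immediate from Corollary~\ref{lowerbound} and integrality; Lemma~\ref{extendlemma} reduces the upper bound to exhibiting one extremal colouring per $k$; the double-counting forces the degree profile $(k-1,k-2,\dots,k-2)$ when $k\equiv 1\pmod 3$; and your observation that $2k+1$ is divisible by $3$ exactly when $k\equiv 1\pmod 3$, so that Proposition~\ref{primecolouring} can never reach the cases where the ceiling is active, is a genuinely useful structural remark. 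Your arithmetic for $k=7$ (the known upper bound $13$ versus the conjectured $12$) is also right.

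The gap is that neither of your two proposed lines is carried out, and both face exactly the difficulty the authors could not overcome. For the direct route you would need, for every $k$ with $2k+1$ composite, an explicit $2$-factorisation (or Hamilton-path decomposition) of some $K_n$ whose multicoloured-triangle profile is exactly $(k-2,\dots,k-2)$ or $(k-1,k-2,\dots,k-2)$; no such family is exhibited, not even the single test case $k=7$. For the inductive route, the proposed ``macro-step'' adding three colours at cost roughly $2k+1$ new triangles is only described by the count it would need to achieve, with no construction showing that the required overlap among the three new colour classes is compatible with keeping each class connected and preserving the hypotheses of Lemma~\ref{inductivecolouring}. Until one of these constructions is actually produced and its triangle count verified exactly, the statement remains a conjecture; your proposal correctly frames the problem but does not prove it.
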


\section{Multicoloured 4-sets in coloured complete 3-graphs}

In this section, we wish to find analogues of these results for hypergraphs. We will focus on the case of 3-uniform hypergraphs (or 3-graphs for short).
\\\\
An analogue of Gallai's theorem for 3-graphs would be the following statement. Suppose we connectedly (in some sense of connectedness) $4$-colour the edges of the complete 3-graph on $n$ vertices, $K_n^{(3)}$, then must there exist a multicoloured 4-set (that is, a $K_4^{(3)}$ with all its edges having different colours)?
\\\\
The notion of connectedness in hypergraphs can be generalised in a natural way from the connectedness of 2-graphs. If we view connectedness as a `1-set property', then this would just be \emph{pointwise connectedness} (although some authors call this `connectedness', see e.g.~\cite{duchet}), that is to say a 3-graph is pointwise connected when there is a path between every pair of vertices, where a \emph{path} is a sequence of intersecting 3-edges. We say a colouring of $K_n^{(3)}$ is a \emph{pointwise connected colouring}  if the subgraph spanned by each of the colours is pointwise connected on $n$ vertices.
\\\\
It is easy to see that if we take a `cycles' colouring, analogous to the paths colouring from the Introduction, where we take colour classes 1, 2, and 3 to be completely unrelated spanning cycles, and class 4 to be everything else, then this does not contain a multicoloured 4-set. For example, let $n$ be prime and let $V(K_n^{(3)}) = \{0,1,2,\ldots,n-1\}$. We partition the edge set of $K_n^{(3)}$, $E(K_n^{(3)})$ into $\mathcal{A} \cup \mathcal{B} \cup \mathcal{C} \cup \mathcal{D}$, where
\begin{eqnarray*}
\mathcal{A} & = & \{012,123,\ldots,(n-2)(n-1)0,(n-1)01\},\\
\mathcal{B} & = & \{024,246,\ldots,(n-4)(n-2)0,(n-2)02\},\\
\mathcal{C} & = & \{036,369,\ldots,(n-6)(n-3)0,(n-3)03\},\\
\mathcal{D} & = & E(K_n^{(3)}) \setminus (\mathcal{A} \cup \mathcal{B} \cup \mathcal{C}).
\end{eqnarray*}
If we colour the edges in each of these sets differently, then each colour spans a pointwise connected subgraph. It is also easy to check that there is no multicoloured 4-set.
\\\\
Note that the above example can be generalised to a $k$-colouring of the complete 3-graph in the obvious way. This is to say, there is a pointwise connected $k$-colouring of $K_n^{(3)}$ such that it contains no multicoloured 4-set.
\\\\
What if we view connectedness as a 2-set property instead? That is to say, a 3-graph is connected when there is a \emph{strong path}, that is, a path where each of the intersection sizes is precisely two, between every pair of 2-sets. (Note that this is a stronger notion than being a covering, where we say a 3-graph is a \emph{covering} if every 2-set is in some edge. In fact, it is the strongest possible notion of connectness for 3-uniform hypergraphs, apart from topological notions such as spanning a disc.)
Formally, and from now onwards, we say a 3-graph $H$ is \emph{connected} if for any $\{u,v\},\{u',v'\}$ in $V(H)^{(2)}$ there is a strong path $P=\{E_1,E_2,\ldots,E_k\}$ in $H$ such that $\{u,v\}\subset E_1$ and $\{u',v'\}\subset E_k$. And similarly, we say a coloured $K_n^{(3)}$ is \emph{connected} if the subgraph spanned by each of the colours is connected on the $n$ vertices.
\\\\
With this notion of connectedness for 3-graphs, one might hope to have a direct analogue of Gallai's theorem. However, it turns out that the analogous statement is again false. We will first focus on general $k$-colourings, and will comment on the particular case of $k=4$ afterwards.
\\\\
The idea is to inductively blow up a coloured complete 3-graph that contains no multicoloured 4-set and add a new colour to it without creating any multicoloured 4-set.

\begin{thm} \label{norainbow}
Let $k\geq 1$. Then there is a connected $k$-colouring of $K_n^{(3)}$, for some sufficiently large $n$, with no multicoloured 4-set.
\end{thm}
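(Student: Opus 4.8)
The plan is to prove the statement by induction on $k$, exactly as the preceding paragraph suggests: blow up a good colouring and add one new colour. The base case $k=1$ is trivial — take $K_n^{(3)}$ monochromatic for some $n\ge 3$; it is connected and of course has no multicoloured $4$-set (indeed no $4$-set is multicoloured at all when $k\le 3$). For the inductive step, suppose $c$ is a connected $k$-colouring of $K_n^{(3)}$ with no multicoloured $4$-set, where we may additionally assume $n\ge 2k+1$: this is no loss, since the construction below passes from a colouring on $n$ vertices to one on $2n$ vertices, so starting the induction on a sufficiently large $K_n^{(3)}$ keeps $n$ comfortably above $2k+1$ throughout. From $c$ I would build a connected $(k+1)$-colouring $c'$ of $K_{2n}^{(3)}$ with no multicoloured $4$-set.

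Write $V(K_{2n}^{(3)})=\{x_1,\dots,x_n,y_1,\dots,y_n\}$, regarding $x_i$ and $y_i$ as two clones of vertex $i$ of $K_n^{(3)}$, and call $i$ the \emph{slot} of $x_i$ and $y_i$. Since each slot has only two vertices, every $3$-set of $V(K_{2n}^{(3)})$ meets either three slots (call it \emph{spread}) or two slots (call it \emph{doubled}; it then has the form $\{x_i,y_i,z_j\}$ with $i\ne j$ and $z\in\{x,y\}$). I would colour every spread triple $\{z_i,z'_j,z''_l\}$ by $c(\{i,j,l\})$, and colour a doubled triple $\{x_i,y_i,z_j\}$ by the new colour $k+1$ unless $z=x$ and $j\equiv i+s\ (\mathrm{mod}\ n)$ for some $s\in\{1,\dots,k\}$, in which case by colour $s$. (Call the pairs $\{x_i,y_i\}$ \emph{vertical}; these are exactly the pairs lying in no spread triple.)

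Three things then need checking. First, no multicoloured $4$-set: four vertices of $K_{2n}^{(3)}$ meet four slots (their four triples are spread, forming a $K_4^{(3)}$ of $c$, not multicoloured by hypothesis), or three slots with one doubled (then two of the four triples are spread and receive the same colour $c(\{i,j,l\})$), or two slots $i,j$ (the four vertices are $x_i,y_i,x_j,y_j$, and the doubled triples $\{x_i,y_i,y_j\},\{x_j,y_j,y_i\}$, whose lone clone is a $y$-vertex, both have colour $k+1$) — in every case two of the four triples share a colour. Second, each old colour $t$ stays connected: its $c'$-subgraph contains a connected copy of $c$'s colour-$t$ subgraph on the $x$'s and another on the $y$'s, linked by the colour-$t$ strong path $\{x_i,x_j,x_l\},\{x_i,x_j,y_l\},\{x_i,y_j,y_l\},\{y_i,y_j,y_l\}$ through any colour-$t$ edge $\{i,j,l\}$ of $c$; every other colour-$t$ spread triple shares a pair with one of the two copies; and the recoloured triple $\{x_i,y_i,x_{i+t}\}$ attaches the vertical pair $\{x_i,y_i\}$ via the pair $\{x_i,x_{i+t}\}$. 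Since $c$'s colour-$t$ subgraph covers every pair of $[n]$, its $c'$-counterpart covers every pair of $V(K_{2n}^{(3)})$, so colour $t$ is connected. Third, colour $k+1$ is connected: all triples $\{x_i,y_i,y_j\}$ ($j\ne i$) have colour $k+1$, so any two vertical pairs are joined through $\{x_i,y_i,y_j\}$ and $\{x_j,y_j,y_i\}$; and every non-vertical pair lies in some colour-$(k+1)$ triple containing a vertical pair — for $\{x_i,x_j\}$ one takes whichever of $\{x_i,y_i,x_j\},\{x_j,y_j,x_i\}$ avoided recolouring, at most one of them being recoloured precisely because $n>2k$ makes $\{i+1,\dots,i+k\}$ and $\{i-k,\dots,i-1\}$ disjoint modulo $n$.

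The last point is where I expect the real obstacle to lie. It is the requirement that the \emph{new} colour class be connected — hence in particular a covering of $K_{2n}^{(3)}$ — that forces colour $k+1$ to be the bulk of the doubled triples and forces the few doubled triples reclaimed for the old colours to be spread out over the slots; this is also what makes the mild lower bound on the number of vertices necessary. Everything else is routine bookkeeping. Since $c'$ lives on $2n\ge 2(k+1)+1$ vertices, the inductive hypothesis is restored and the induction closes.
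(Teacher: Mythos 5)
Your construction is correct, and it is genuinely different from the one in the paper. Both proofs are inductive blow-ups, but the paper replaces each vertex of $K_n^{(3)}$ by a block of $n$ vertices (so $n\mapsto n^2$), colours the ``spread'' triples by $c$ on the block indices, colours the non-spread triples whose second coordinates are distinct by $c$ on those second coordinates, and gives the new colour $k+1$ only to the leftovers; connectivity of each old colour then comes from having a full copy of $c$ inside every block. You instead replace each vertex by just two clones (so $n\mapsto 2n$), give the new colour to essentially \emph{all} of the doubled triples, and then hand back to each old colour $t$ the single shifted triple $\{x_i,y_i,x_{i+t}\}$ per slot so that the vertical pairs $\{x_i,y_i\}$ (the only pairs missed by the spread triples) are still covered by every old colour class. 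Your case analysis for the absence of a multicoloured $4$-set (four slots, three slots, two slots) is complete, the strong-connectivity checks for old and new colours go through, and the condition $n\ge 2k+1$ correctly guarantees that at most one of $\{x_i,y_i,x_j\}$, $\{x_j,y_j,x_i\}$ is recoloured, so colour $k+1$ remains a covering; the invariant $n\ge 2k+1$ is indeed preserved since $2n\ge 2k+3$. What your approach buys is a dramatically smaller host graph: starting from $K_3^{(3)}$ you reach $k$ colours on $3\cdot 2^{k-1}$ vertices, versus the paper's $n\mapsto n^2$ iteration, which is doubly exponential in $k$ (already $6561$ vertices for $k=4$, as the paper notes). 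The price is that the new colour class must be checked to be connected by a slightly more delicate ad hoc argument (via the vertical pairs), whereas in the paper's version each colour class inherits its structure more uniformly from $c$.
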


\begin{proof}
The case $k=1$ is trivial. Suppose $c$ is a connected $k$-colouring of $K_n^{(3)}$ with no multicoloured 4-set. We show that we can $(k+1)$-colour $K_{n^2}^{(3)}$ such that it is connected and does not contain any multicoloured 4-set.
\\\\
Let $V\big(K_{n^2}^{(3)}\big) = V_1 \cup V_2 \cup \ldots \cup V_n$, where $V_i=\{v_{ij}:1\leq j \leq n\}$. We define the $(k+1)$-colouring $c'$ as follows.

\begin{equation*}
c'(v_{ix}v_{jy}v_{lz}) = 
\begin{cases}
c(ijl) & \text{ if $i,j,l$ all distinct},\\
c(xyz) & \text{ if $i,j,l$ not all distinct and $x,y,z$ all distinct},\\
k+1 & \text { otherwise}.
\end{cases}
\end{equation*}
We claim that $c'$ is a connected colouring of $K_{n^2}^{(3)}$. We need to check that the subgraph spanned by colour $s \in [k+1]$, $H_s$ is connected. We shall check that for every pair of 2-sets, $\{v_{ix},v_{jy}\},\{v_{pz},v_{qt}\}$, there is always a strong path in $H_s$ between them. We will do the case when $s\in [k]$. The case $s=k+1$ is similar and hence is left for the reader.
\\\\
If all the four vertices are from different blocks or they are all from the same block, it is clear that there is such a path, induced from colouring $c$. Suppose now that they are from three different blocks. There are two cases for this, that is, when $i=j, p\neq q,i\notin \{p,q\}$ and when $i=p,j\neq q,i\notin \{j,q\}$. For the former case, there must be an edge of colour $s$, $E=\{v_{ix},v_{iy},v_{ru}\}$ with $r\notin \{i,p,q\}$ and with the path between $\{v_{ix},v_{ru}\}$ and $\{v_{pz},v_{qt}\}$, induced from colouring $c$, we have the required path. For the latter case, since there is a path of colour $s$ in the colouring $c$ between $\{i,j\}$ and $\{i,q\}$, this induces a path in $H_s$ joining $\{v_{ix},v_{jy}\}$ and $\{v_{iz},v_{qt}\}$. The case when the four vertices are in two different blocks is similar. Hence, $c'$ is indeed a connected colouring.
\\\\
Now, we claim that $c'$ does not span a multicoloured 4-set. Let $\{v_{ix},v_{jy},v_{pz},v_{qt}\}$ be a 4-set. If $i,j,p,q$ or $x,y,z,t$ are all distinct, then the colour of the 4-set is the same as a 4-set induced by $c$ on $K_n^{(3)}$, which is not multicoloured. Suppose now that they are in three different blocks, that is, $i=j, p\neq q,i\notin \{p,q\}$, then $c'(v_{ix}v_{pz}v_{qt})=c'(v_{jy}v_{pz}v_{qt})=c(ipq)$, hence not multicoloured. If they are from two different blocks, there are two cases to consider, that is, when $j=p=q, x=y$ and when $i=j,p=q,x=z$. For the former case, we have $c'(v_{ix}v_{pz}v_{qt})=c'(v_{jy}v_{pz}v_{qt})=c(xzt)$, hence not multicoloured. For the latter case, we have $c'(v_{ix}v_{jy}v_{qt})=c'(v_{jy}v_{pz}v_{qt})=c(xyt)$, also not multicoloured.
\\\\
We have now exhibited a $(k+1)$-colouring of $K_{n^2}^{(3)}$ such that it is connected and contains no multicoloured 4-set. This completes the proof of the theorem.
\end{proof}

\noindent
The theorem above says that we can connectedly 4-colour the complete 3-graph to avoid any multicoloured 4-set In how small a complete 3-graph can this be done? For example, the above colouring requires $n$, the number of vertices, to be about $3^8=6561$.
\\\\
We now show that one may take $n=17$, by giving an explicit connected 4-colouring of $K_{17}^{(3)}$ with no multicoloured 4-set. We suspect that the value of 17 is optimal.

\begin{prop}
There is a connected 4-colouring of $K_{17}^{(3)}$ with no multicoloured 4-set.
\end{prop}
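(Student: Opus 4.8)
The plan is to exhibit an explicit 4-colouring of $K_{17}^{(3)}$ and then verify the two required properties: connectedness of each colour class (in the strong-path sense) and the absence of a multicoloured $K_4^{(3)}$. The natural thing to try, given the cyclic flavour of the constructions already used in the paper, is an algebraic colouring on the vertex set $\mathbb{Z}_{17}$: assign to each triple $\{a,b,c\}$ a colour depending only on the multiset of cyclic `gap lengths', i.e. the three differences around the triple (equivalently, on the orbit of $\{a,b,c\}$ under the affine group $x \mapsto x + t$, or perhaps under $x \mapsto \lambda x + t$ for a suitable subgroup of multipliers). Since $17$ is prime and $|\mathbb{Z}_{17}^{(3)}| = 680$, one chooses the four colour classes so that each is a union of such difference-orbits; the orbits under translation alone have size $17$, so one partitions the $40$ orbits into four groups. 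The first step is therefore to write down the list of gap-type orbits and choose an explicit partition into four colour classes $\mathcal{C}_1,\mathcal{C}_2,\mathcal{C}_3,\mathcal{C}_4$.

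Next I would check connectedness. For a colour class that is a union of translation-orbits, a strong path between $\{u,v\}$ and $\{u',v'\}$ can be built in two moves of a `transposition' type: from an edge $\{a,b,c\}$ one can reach $\{a,b,c'\}$ whenever both triples lie in the class, so it suffices to show that for any $2$-set $\{a,b\}$ the `link' of $\{a,b\}$ inside the colour class — the set of $c$ with $\{a,b,c\}$ that colour — together with the translation structure, lets one walk from any $2$-set to any other. Because translation acts transitively on $2$-sets of each difference, one really only needs: (i) for each colour, every difference $d \in \{1,\dots,8\}$ occurs as an edge of some triple in that class (so every $2$-set is covered), and (ii) the auxiliary graph on $2$-sets, with $\{a,b\} \sim \{a,c\}$ when $\{a,b,c\}$ is in the class, is connected. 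Both reduce to small finite checks once the partition is fixed. The third step is the no-multicoloured-$4$-set condition: a $4$-set $\{a,b,c,d\} \subset \mathbb{Z}_{17}$ determines its four triples, and one must verify that among the (up to affine symmetry) $680$ quadruples — really far fewer, since one only needs one representative per orbit of $4$-sets under $\mathbb{Z}_{17}$, of which there are $\binom{16}{3}/\text{(symmetry)} \approx 35$ — no quadruple has its four triples landing in four distinct classes. This is where I would lean on the algebraic structure: the four gap-lengths of the triples of a fixed $4$-set are constrained (e.g. they are obtained by `merging' adjacent gaps of the $4$-set), and one wants to argue that two of any such four triples are forced to share a colour.

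The step I expect to be the genuine obstacle is finding the partition at all — that is, choosing which difference-orbits go into which colour so that \emph{simultaneously} all four classes are connected and no $4$-set is rainbow. Connectedness pushes each class to be `spread out' (contain many gap-types, in particular small gaps), while killing rainbow $4$-sets pushes the classes to be `aligned' with the merging structure of gaps, and balancing these is delicate; this is presumably why the construction is stated for the specific value $n = 17$ rather than derived from a clean general principle, and why the authors only \emph{suspect} optimality. My fallback, if a purely cyclic colouring cannot be made to work, would be to allow the colour of a triple to depend on slightly more than its translation-orbit — for instance a twisted version using the quadratic-residue structure of $\mathbb{Z}_{17}^{*}$, or a hand-built colouring obtained by starting from the blow-up construction of Theorem~\ref{norainbow} on a smaller base and then economising vertices — but in all cases the write-up reduces to: present the colouring explicitly, then dispatch connectedness and the $4$-set condition by the finite checks above, organised by the orbit structure so that only a handful of representative cases need to be examined by hand.
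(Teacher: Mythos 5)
Your framework is exactly the one the paper uses --- colour each triple according to its `type' (the multiset of the three pairwise cyclic distances in $\mathbb{Z}_{17}$, i.e.\ the orbit under translation and reflection), and obtain the four colour classes from one another by the multiplier action of the subgroup $\{1,2,4,8\}$ of $\mathbb{Z}_{17}^{*}$ modulo $\pm1$ (which is precisely the quadratic-residue structure you mention as a fallback). Your verification plan is also sound and matches the paper's: connectedness reduces, by the transitivity of the symmetry group on each class and on the $2$-sets of each difference, to checking strong paths from a single pair such as $\{v_0,v_1\}$ to all others in one class; and the rainbow-$4$-set check reduces to finitely many orbit representatives, e.g.\ by fixing one triple of each type and letting the fourth vertex range.

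The genuine gap is the one you yourself flag: you never produce the partition, and the entire content of the proposition lives in that choice. There are $24$ types ($8$ `special' types with a repeated distance and $16$ others), and a valid class consists of $2$ special and $4$ non-special types; you give no argument that a partition of the $24$ types into four such classes, each connected and jointly admitting no rainbow $4$-set, actually exists, nor any search procedure guaranteed to find one. The paper resolves this by exhibiting the base class explicitly, namely $\mathcal{C}=\{112,336,145,235,347,458\}$, with the other three classes $2\mathcal{C}$, $4\mathcal{C}$, $8\mathcal{C}$; one checks that these four sets partition the types, that each class is connected (only $\mathcal{C}$ need be checked, the four classes being isomorphic under the multiplier maps), and that no $4$-set is multicoloured. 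Without that explicit datum --- or at least an existence argument for it --- your write-up is a correct strategy but not a proof; everything after the words ``once the partition is fixed'' is conditional on a step that is the actual mathematical content here.
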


\begin{proof}
We would like to have a very symmetric colouring, and indeed we will have that any two of our colour classes are isomorphic 3-graphs.
Let the vertices of $K_{17}^{(3)}$ be $\{v_0,v_1,\ldots,v_{16}\}$. We define the \emph{distance} of two vertices, $v_i,v_j$ to be $\min\{|i-j|,17-|i-j|\}$. For each edge $v_iv_jv_k$, its `type' is a 3-tuple consisting the three distances of the three pairs of vertices. For example, we say the edge $v_1v_2v_4$ is of type $(1,2,3)$ (or simply type 123 in short).
\\\\
All edges of a given type will receive the same colour. Note that there are 8 special types of edges with a repeated distance, namely $\text{type }112, \text{type }224, \ldots, \text{type } 881$. So each colour class should contain 2 of those and 4 other types of edges.
\\\\
We are now ready to give a 4-colouring without multicoloured 4-set. Let $\mathcal{C}$ be a set of types of edges, namely $\mathcal{C}=\{112,336,145,235,347,458\}$. For a positive integer $k$, we write $k\mathcal{C}=\{k\times C: C\in \mathcal{C}\}$, where $k\times (a,b,c) = (ka\pmod{17}, kb\pmod{17}, kc\pmod{17})$. (Here, we view $x$ as the same as $17-x$.)
\\\\
One can check that $\mathcal{C} \cup 2\mathcal{C} \cup 4\mathcal{C} \cup 8\mathcal{C}$ partitions the types of edge in $K_{17}^{(3)}$. Now we can colour each of the edges of $K_{17}^{(3)}$ by one of four different colours depending on which set its type lies in. 
\\\\
To check this colouring is indeed connected on $K_{17}^{(3)}$, we can check that in the subgraph spannned by each colour, there is a strong path from $\{v_0,v_1\}$ to every other pair of vertices. For example, from $\{v_0,v_1\}$ to $\{v_5,v_9\}$, we have the path $\{v_0v_1v_2, v_0v_2v_5,v_2v_5v_9\}$ in the subgraph spanned by the colour in correspondence to $\mathcal{C}$. Note that we only need to check for the case $\mathcal{C}$, as the four subgraphs spanned by the four colours are isomorphic. The rest of the cases are similar.
\\\\
Suppose now that there is a multicoloured 4-set and one of the edges are from the special types. We may assume that this 4-set is $\{v_0,v_1,v_2,v_x\}$. It is enough to consider the cases when $3\le x\le 9$, and in each of these cases the 4-set is not multicoloured. So a multicoloured 4-set cannot have any special type edge. Suppose now that one of the edges is of type 145; again we may assume that the 4-set is $\{v_0,v_1,v_5,v_x\}$. For each value of $x$, we again claim that the 4-set is not multicoloured. For example, when $x=6$, the edge $v_0v_1v_5$ and the edge $v_1v_5v_6$ have the same colour, and hence not multicoloured. All the remaining cases are similar, and so there is no multicoloured 4-set in this colouring.
\end{proof}

\noindent
From the above, it seems that there is no direct analogue of Gallai's theorem in 3-uniform hypergraphs. But perhaps this is because a multicoloured 4-set is too much to ask for, and maybe we should look for a 3-coloured 4-set instead?
\\\\
In each of the colourings of $K_n^{(3)}$ without any multicoloured 4-set we had, there are many 4-sets that have three different edge colours. We say such 4-sets are \emph{tricoloured}. On the other hand, any non-trivial colouring of $K_n^{(3)}$ using at least two colours contains a 4-set that has at least two different edge colours.
\\\\
So it is natural to ask: given some connectedness condition on the $k$-colouring of $K_n^{(3)}$, must it always contain a tricoloured 4-set? From the colourings we have on $K_n^{(3)}$ that avoid multicoloured 4-sets, one might hope that, for any connectedness condition we apply, such a colouring must contain a tricoloured 4-set.
\\\\
Surprisingly, this is not entirely correct. Indeed, suppose we weaken the condition of connectedness of 3-graphs we had before by only requiring the presence of a path (and not a strong path) between every pair of 2-sets - note that this is exactly the condition of being a covering, as defined earlier. We now give a covering $k$-colouring of $K_n^{(3)}$ (again, this means that every colour class is a covering) without any tricoloured 4-set. This colouring is very similar to the one in Theorem~\ref{norainbow}, but rather easier as we have a weaker notion of connectedness.

\begin{lemma} \label{notricoloured}
Let $k\geq 1$. Then there is a covering $k$-colouring of $K_n^{(3)}$, for some sufficiently large $n$, with no tricoloured 4-set.
\end{lemma}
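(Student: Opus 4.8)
The plan is to prove this by induction on $k$, via a substitution (blow-up) construction in the spirit of Theorem~\ref{norainbow}, but with a much simpler rule on the degenerate edges — this simplification is exactly what the weaker hypothesis (covering, rather than strongly path-connected) permits. The base case $k=1$ is the one-edge colouring of $K_3^{(3)}$, which is trivially a covering and vacuously has no tricoloured $4$-set. For the inductive step I will show that a covering $k$-colouring $c$ of $K_n^{(3)}$ with $n\ge 3$ and no tricoloured $4$-set yields a covering $(k+1)$-colouring $c'$ of $K_{n^2}^{(3)}$ with no tricoloured $4$-set; iterating gives the statement on $3^{2^{k-1}}$ vertices, which is certainly "sufficiently large".

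For the step, partition $V\big(K_{n^2}^{(3)}\big)$ into blocks $V_1,\dots,V_n$ of size $n$ and install a copy of $c$ on each $V_i$ (via a bijection $V_i\to[n]$). Colour an edge $e$ whose vertices lie in blocks $i,j,l$ (repetitions allowed) by: $c(\{i,j,l\})$ if $i,j,l$ are all distinct; the colour dictated by the copy of $c$ on the block, if all three vertices lie in one block; and the new colour $k+1$ otherwise, i.e.\ when exactly two of the three vertices share a block.

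Checking that $c'$ is a covering is a quick two-case analysis on a $2$-set $\{v,w\}$. If $v\in V_i$, $w\in V_j$ with $i\ne j$, then any colour $s\le k$ is realised by choosing (since $c$ is a covering) a block $V_l$ with $c(\{i,j,l\})=s$ and adjoining a vertex of $V_l$, while colour $k+1$ is realised by adjoining a second vertex of $V_j$. If $v,w\in V_i$, the colours $1,\dots,k$ are realised inside $V_i$ (the copy of $c$ there is a covering) and colour $k+1$ by adjoining a vertex outside $V_i$.

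The substantive point is that no $4$-set $S$ is tricoloured, and one splits according to the number $b$ of blocks that $S$ meets. For $b=4$, all four edges of $S$ are distinct-block edges, so the colour set of $S$ equals that of the corresponding $4$-set of blocks under $c$, hence has size at most $2$. For $b=1$, $S$ sits inside one block and the inductive hypothesis applies. For $b=2$, a glance at the $3{+}1$ and $2{+}2$ splittings gives colour set $\{\text{an internal colour},\,k+1\}$ or $\{k+1\}$. The case $b=3$ is the one to get right: if $S$ meets $V_i$ twice and $V_j,V_l$ once each, the two edges of $S$ that omit one of the two block-$i$ vertices are distinct-block edges, both coloured $c(\{i,j,l\})$, and the other two edges each have two vertices in $V_i$ and so are both coloured $k+1$ — two colours in all. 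I expect this $b=3$ case to be essentially the whole difficulty: it is precisely the blanket rule "two vertices in a common block $\Rightarrow$ colour $k+1$" that forces those two degenerate edges to agree, a coincidence the more elaborate construction of Theorem~\ref{norainbow} does not engineer (there one instead recurses on second coordinates, and can be left with three colours on such a $4$-set), which is why that construction would not transfer directly.
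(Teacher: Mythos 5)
Your construction is exactly the one in the paper: the same blow-up with colour $c(ijl)$ for three distinct blocks, the internal copy of $c$ for a single block, and the new colour $k+1$ otherwise. The paper leaves the verification as "not hard to check"; your case analysis by the number of blocks met (including the key $b=3$ case) correctly supplies those details, so the proposal is correct and essentially identical in approach.
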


\begin{proof}
The case $k=1$ is trivial. Suppose $c$ is a covering $k$-colouring of $K_n^{(3)}$ with no tricoloured 4-set. We want to $(k+1)$-colour $K_{n^2}^{(3)}$ such that it is a covering and does not contain any tricoloured 4-set.
\\\\
Let $V\big(K_{n^2}^{(3)}\big) = V_1 \cup V_2 \cup \ldots \cup V_n$, where $V_i=\{v_{ij}:1\leq j \leq n\}$. We define the $(k+1)$-colouring $c'$ as follows.

\begin{equation*}
c'(v_{ix}v_{jy}v_{lz}) = 
\begin{cases}
c(ijl) & \text{ if $i,j,l$ all distinct},\\
c(xyz) & \text{ if $i=j=l$},\\
k+1 & \text { otherwise}.
\end{cases}
\end{equation*}
As in the proof of Theorem~\ref{norainbow}, it is not hard to check that $c'$ is in fact a covering $(k+1)$-colouring of $K_{n^2}^{(3)}$ without any tricoloured 4-set.
\end{proof}

\noindent
Despite the above colouring with no tricoloured 4-set, we still believe that every connected $k$-coloured $K_n^{(3)}$ must contain an tricoloured 4-set. This is our conjectured extension of Gallai's theorem. 

\begin{conjecture}
For all sufficiently large $n$, every connected $3$-colouring of $K_n^{(3)}$ must contain a tricoloured 4-set. \qed
\end{conjecture}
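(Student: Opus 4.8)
The plan is to argue by contradiction: suppose $n$ is large and $c$ is a connected $3$-colouring of $K_n^{(3)}$ with no tricoloured $4$-set, and derive a contradiction. The first step is a local reduction to Gallai's theorem. For a vertex $v$, let $L_v$ denote the \emph{link colouring}: the edge-colouring of the complete graph on $V(K_n^{(3)})\setminus\{v\}$ in which $\{x,y\}$ receives colour $c(vxy)$. If some $L_v$ contained a rainbow triangle $xyz$, then $c(vxy),c(vxz),c(vyz)$ would be three distinct colours and $\{v,x,y,z\}$ would be a tricoloured $4$-set. Hence \emph{every} link $L_v$ is a Gallai colouring of $K_{n-1}$, i.e.\ has no rainbow triangle.

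The second step extracts information from strong connectivity: taking the two $2$-sets in the definition of connectedness to be equal shows that every colour class, viewed as a $3$-graph on all $n$ vertices, is a \emph{covering} — every $2$-set lies in an edge of each colour. In particular every link $L_v$ uses all three colours, and, at each vertex $u\neq v$, every colour appears on some edge of $L_v$ incident to $u$.

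The third step applies the structure theorem for Gallai colourings to each link: for each $v$ there is a partition of $V\setminus\{v\}$ into $m_v\ge 2$ parts such that the reduced complete graph on the parts is coloured with a palette $P_v$ of size at most two, and any colour outside $P_v$ occurs only inside parts. I first dispose of the degenerate case in which, for some $v$, every part of $L_v$ has size at most $2$. The covering property of Step 2 then forces $|P_v|=2$, and, writing $3$ for the colour not in $P_v$, forces every part to have size exactly $2$ with its unique edge of colour $3$ (a size-$1$ part, or a size-$2$ part whose edge is not colour $3$, would contain a vertex $u$ for which $\{v,u\}$ lies in no colour-$3$ edge). Thus the colour-$3$ graph of $L_v$ is a perfect matching $M_v$ on $V\setminus\{v\}$, so the only colour-$3$ edge of $K_n^{(3)}$ through $\{v,u\}$ is $T=\{v,u,M_v(u)\}$, and the same analysis applied to the other two vertices of $T$ shows $T$ is the only colour-$3$ edge through each of its three $2$-subsets. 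Hence a strong path in colour $3$ beginning at $T$ cannot leave $T$, so the colour-$3$ $3$-graph has a strong component of size one — contradicting its strong connectivity (for $n$ large it has more than one edge). Therefore no link can have all its Gallai parts of size at most $2$: every link $L_v$ has a Gallai partition containing a part of size at least $3$.

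The fourth step — and the main obstacle — is to exploit a large part to run a \emph{global} version of the $2$-graph argument. In the graph case, Gallai's theorem for three connected colours holds because the structure theorem produces a vertex partition into at least two classes between which only two colours appear, so the third colour is confined to the classes and cannot be connected. The hope here is to produce, from the various link partitions, a global partition of $V$ into at least two classes between which only two colours occur on triples, and then to conclude that the third colour cannot be strongly connected on all of $V$; one would try to build this by descending into a size-$\ge 3$ part of some link, recursing, and using that $n$ is large so that the process terminates. The difficulty is that there is no off-the-shelf ``Gallai structure theorem'' for $3$-graph colourings with no tricoloured $4$-set: the colourings we know that avoid tricoloured $4$-sets, such as the covering colouring built in Lemma~\ref{notricoloured}, are not of a clean substitution type, and — being coverings but not strongly connected — they show that any usable structural statement must use strong connectivity essentially. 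Coordinating the separate Gallai partitions of the many links into one coherent global structure is where the real work, and most likely a genuinely new idea, will be needed; this is presumably why the statement is left as a conjecture.
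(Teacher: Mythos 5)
This statement is left as a conjecture in the paper --- no proof is given there --- and your proposal does not close it either: as you yourself say, Step~4 is where the entire difficulty lies, and it is not carried out. Steps~1 and~2 are correct and are the natural first reductions. A tricoloured $4$-set in a $3$-colouring is exactly a rainbow triangle in some link $L_v$, so the hypothesis makes every link a Gallai colouring; and strong connectivity does imply that each colour class is a covering, so every link uses all three colours at every vertex. That much is solid ground.

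Beyond that there are two problems. First, your Step~3 degenerate-case argument has a hole: from $L_v$ you control only the colour-$3$ triples through pairs containing $v$, so while $T=\{v,u,M_v(u)\}$ is the unique colour-$3$ triple through $\{v,u\}$ and through $\{v,M_v(u)\}$, a strong path in colour $3$ may leave $T$ through the pair $\{u,M_v(u)\}$ via a triple $\{u,M_v(u),z\}$ with $z\neq v$; to forbid this you would need the links of $u$ and $M_v(u)$ to be degenerate as well, which does not follow from the degeneracy of $L_v$ alone. Second, Step~4 is not a proof step but a restatement of the open problem: there is no Gallai-type structure theorem for $3$-graph colourings without tricoloured $4$-sets, and Lemma~\ref{notricoloured} shows that any such globalisation must use strong connectivity (not mere covering) in an essential way, since covering colourings with no tricoloured $4$-set exist. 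Coordinating the $n$ separate link partitions into a single global partition whose cross-triples use only two colours is precisely the missing idea, and until it is supplied this remains a conjecture, not a theorem.
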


\section{Further remarks and questions}

We remarked after Proposition~\ref{primecolouring} that of course $K_{2r}$ is the minimal complete graph to have a connected $k$-colouring, because a connected 2-graph on $n$ vertices must have at least $n-1$ edges. In order to determine the minimal complete 3-graph having a connected $k$-colouring, we need to know the minimal number of edges of a connected 3-graph on $n$ vertices. We have the following simple result.

\begin{lemma}
Let $H_n$ be a connected 3-graph on $n$ vertices. Then $\big|E(H_n)\big| \geq \big\lfloor \frac{1}{2}\binom{n}{2} \big\rfloor$. Moreover, this bound can be obtained.
\end{lemma}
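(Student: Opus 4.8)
The plan is to recast the connectedness of a $3$-graph as an ordinary-graph statement and then double count.

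\emph{Lower bound.} Suppose $H$ is connected. Taking a strong path from $\{u,v\}$ to itself shows that $H$ is a \emph{covering}: every pair of vertices lies in an edge. Next, let $\Gamma$ be the graph on vertex set $E(H)$ in which two edges are joined when they meet in exactly two vertices; I claim $\Gamma$ is connected. Indeed, given $e,f\in E(H)$, choose pairs $p\subseteq e$, $q\subseteq f$, and apply the definition of connectedness to $p,q$: the resulting strong path $E_1,\dots,E_k$ (with $p\subseteq E_1$, $q\subseteq E_k$, consecutive edges meeting in two vertices) extends to a walk $e,E_1,\dots,E_k,f$ in $\Gamma$, since $e$ and $E_1$ both contain $p$ and so are equal or $\Gamma$-adjacent, and likewise $E_k,f$. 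Now for a pair $p$ write $d_p$ for the number of edges of $H$ through $p$; then $d_p\ge 1$ for all $\binom n2$ pairs, and $\sum_p d_p = 3|E(H)|$. The edges through a fixed $p$ pairwise meet in exactly two vertices, hence span a clique in $\Gamma$, and distinct pairs give edge-disjoint cliques (a $\Gamma$-edge determines the pair in which its endpoints meet). Replacing each such clique by a spanning tree yields a spanning subgraph $\Lambda\subseteq\Gamma$ with the same connected components as $\Gamma$ --- hence connected --- and with $|E(\Lambda)| = \sum_p(d_p-1) = 3|E(H)| - \binom n2$. Since a connected graph on $|E(H)|$ vertices has at least $|E(H)|-1$ edges, $3|E(H)| - \binom n2 \ge |E(H)| - 1$, so $|E(H)| \ge \tfrac12\big(\binom n2 - 1\big)$, and rounding up gives $|E(H)| \ge \big\lfloor \tfrac12\binom n2 \big\rfloor$ (both parities of $\binom n2$ give the same value here).

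\emph{Construction.} I would produce an extremal $H$ on $[n]$ by induction, adding two vertices $u,v$ at a time to an extremal connected covering $3$-graph on $[n-2]$, with bases $n=3$ (one edge) and $n=4$ (the edges $\{1,2,3\},\{1,2,4\},\{1,3,4\}$). The new pairs to cover are $\{x,u\}$ and $\{x,v\}$ for $x\in[n-2]$, together with $\{u,v\}$. Take a (near-)perfect matching $M$ of $[n-2]$ and, for each $\{a,b\}\in M$, add the edges $\{a,b,u\}$ and $\{a,b,v\}$: these cover the relevant new pairs two at a time and automatically attach to the old hypergraph through $\{a,b\}$, which it already covers; a couple of extra edges of the form $\{x,u,v\}$ handle $\{u,v\}$ and any vertex left unmatched. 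A short computation shows $\big\lfloor\tfrac12\binom n2\big\rfloor - \big\lfloor\tfrac12\binom{n-2}2\big\rfloor$ is $n-2$ when $\binom n2$ is odd and $n-1$ when it is even, and one tunes how many vertices $M$ leaves unmatched --- and whether to throw in one further ``connector'' edge, which is needed exactly when $\binom n2$ is even, since then the extremal $\Lambda$ must be unicyclic rather than a tree --- so that the edge count comes out exactly right.

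The lower bound is really the point, and is essentially forced once $\Gamma$ is introduced; the effort goes into the construction, where covering every pair, keeping $\Gamma$ connected, and hitting the precise value $\big\lfloor\tfrac12\binom n2\big\rfloor$ interact just enough that the residues of $n$ modulo $4$ have to be handled separately.
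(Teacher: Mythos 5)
Your lower bound argument is correct, and it is essentially the paper's argument viewed from the other side of the incidence structure: the paper builds a connected graph on the $\binom{n}{2}$ pairs (a triangle, minus one edge, for each hyperedge) and gets $2|E(H)|\ge\binom n2-1$; you build a connected graph on the $|E(H)|$ hyperedges and get $3|E(H)|-\binom n2\ge|E(H)|-1$, which is the same inequality. Both are fine; the paper's version avoids the clique-to-spanning-tree step.

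The construction, however, has a genuine gap: the two-vertices-at-a-time induction provably cannot work when $\binom n2$ is odd, i.e.\ for $n\equiv 2,3\pmod 4$, and no amount of tuning the matching or adding connector edges fixes it. In that case your budget is exactly $n-2$ new edges to cover the $2n-3$ new pairs ($\{x,u\},\{x,v\}$ for $x\in[n-2]$, and $\{u,v\}$). Writing $t$ for the number of new edges containing both $u$ and $v$ and $s$ for those containing exactly one, the covering requirement forces $3t+2s\ge(2n-3)+(t-1)$ (the pair $\{u,v\}$ alone is overcounted $t-1$ times), i.e.\ $t+s\ge n-2$; so every new edge meets $\{u,v\}$ and \emph{every} new pair other than $\{u,v\}$ lies in exactly one new edge. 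But then an edge $\{x,u,v\}$ has all three of its pairs new, each covered only by edges containing $\{u,v\}$, so in your graph $\Gamma$ the set of such edges forms a connected component separate from the rest of the hypergraph: strong connectedness fails. (Concretely, for $n=6$ you would need $7$ edges, i.e.\ $4$ new ones over the extremal $H_4$, and the unique coverage pattern $3+2+2+2$ isolates the edge through $\{u,v\}$.) This is exactly the obstruction the paper's construction is designed around: it adds \emph{four} vertices at a time, which preserves the parity of $\binom n2$ and leaves a slack of $2n+3$ in the coverage count --- enough to let the edges among the new vertices overlap with the others --- and handles even-to-odd $n$ by a separate one-vertex step where a perfect matching suffices with no slack needed. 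So the extremal examples for $n\equiv2,3\pmod4$ have to be built from those for $n-4$, not from those for $n-2$.
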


\begin{proof}
To show the lower bound, we construct a connected 2-graph $G_n$ on $\binom{n}{2}$ vertices from $H_n$. Let the vertex set of $G_n$ indexed by the 2-sets of vetices of $H_n$. For each edge $v_iv_jv_k$ in $H_n$, we add three edges $(v_iv_j)(v_iv_k)$, $(v_iv_j)(v_jv_k)$ and $(v_iv_k)(v_jv_k)$ to $G_n$. By the connectedness of $H_n$, we can see that $G_n$ is connected. In fact, if we delete one of the three edges added to $G_n$ from each edge $v_iv_jv_k$ in $H_n$, $G_n$ remains connected. 
\\\\
By construction, $G_n$ has $2\big|E(H_n)\big|$ edges and together with the fact that $G_n$ being connected implies that it has at least $\binom{n}{2} - 1$ edges, implying $H_n$ must have at least $\big\lfloor \frac{1}{2}\binom{n}{2} \big\rfloor$ edges.
\\\\
For the upper bound, we show by inductive constructions that there is a connected 3-graph on $n$ vertices with $\big\lfloor \frac{1}{2}\binom{n}{2} \big\rfloor$ edges. 
\\\\
We first deal with the case when $n$ is even. Given $H_n$ with $V(H_n)=\{x_1,\ldots,x_k,y_1,\ldots,y_k\}$, we construct $H_{n+4}$ as follows.
\begin{eqnarray*}
V(H_{n+4})&:=&V(H_n) \cup \{a,b,c,d\}, \\
E(H_{n+4})&:=&E(H_n) \cup \{ax_iy_i:1\leq i\leq k\} \cup \{bx_iy_i:1\leq i\leq k-1\} \cup \\
	       & & \{cx_iy_i:1\leq i\leq k\} \cup \{dx_iy_i:1\leq i\leq k\} \cup \{abx_k,abc,acd,bdy_k\}.
\end{eqnarray*}

\noindent
It is not hard to check that $H_{n+4}$ is connected if $H_n$ is connected. We need two base cases, that is, when $n=2,4$. For $n=2$, we can simply take $H_2$ to be the empty 3-graph on two vertices and for $n=4$, we can take $H_4$ to be the complete 3-graph on four vertices taking away an edge. Now $|E(H_{n+4})| = |E(H_n)|+ 2n +3 = \big\lfloor \frac{1}{2}\binom{n}{2} \big\rfloor + 2n+3 = \big\lfloor \frac{1}{2}\binom{n+4}{2} \big\rfloor$.
\\\\
We can now construct a connected 3-graph on $n+1$ vertices from one on $n$ vertices, with $n$ being even. Given $H_n$ with $V(H_n)=\{x_1,\ldots,x_k,y_1,\ldots,y_k\}$, we construct $H_{n+1}$ as follows.
\begin{eqnarray*}
V(H_{n+1})&:=&V(H_n) \cup \{a\}, \\
E(H_{n+1})&:=&E(H_n) \cup \{ax_iy_i\}:1\leq i \leq k \}.
\end{eqnarray*}

\noindent
It is straightforward to check that $H_{n+1}$ is indeed connected and 
$|E(H_{n+1})| = |E(H_n)|+ \frac{n}{2} = \big\lfloor \frac{1}{2}\binom{n}{2} \big\rfloor + \frac{n}{2} = \big\lfloor \frac{1}{2}\binom{n+1}{2} \big\rfloor$.
\end{proof}

\noindent
In Section 3, we tried to extend Gallai's theorem to hypergraphs. Returning to graphs, we could also ask, what about a multicoloured $K_d$ in a connectedly $k$-coloured $K_n$, for any $d>3$? The exact same paths colouring we had in the Introduction shows that there exists a connectedly $k$-coloured $K_n$ without any multicoloured $K_d$. But another question would be, how many colours must some $K_d$ have in a connected $k$-colouring of $K_n$? For example, if we have a connected $6$-colouring of $K_n$, then there must exist a $K_4$ that spans at least four colours - this is a simple consequence of Gallai's theorem plus the fact that every vertex is incident with edges of all colours. In the other direction, we can take five disjoint paths on $n$ vertices such that the union of them contains no cycles of length at most 4 and give the paths colouring (as in the Introduction) to deduce that every $K_4$ spans at most four colours. 

\begin{prop}
Let $3\leq d \leq k$. Then there is a $K_d$ that spans at least $d$ colours in any connectedly $k$-coloured $K_n$. Moreover, for all sufficiently large $n$, there exists a connectedly $k$-coloured $K_n$ with no $K_d$ spanning more than $d$ colours.
\end{prop}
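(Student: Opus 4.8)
The plan is to treat the two assertions separately.

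For the first --- that every connected $k$-colouring of $K_n$ contains a $K_d$ spanning at least $d$ colours --- I would induct on $d$. The base case $d=3$ is Gallai's theorem: partition $[k]$ into three non-empty sets $R_1\cup R_2\cup R_3$; since a union of connected spanning subgraphs is again connected and spanning, the colours in each $R_i$ span a connected spanning subgraph, so Gallai gives a triangle meeting $R_1$, $R_2$ and $R_3$, and as the $R_i$ are disjoint this triangle has three distinct colours. For the inductive step (from $d$ to $d+1$, where $d+1\le k$), take a $K_d$ on a vertex set $T$ whose colour set $C$ has $|C|\ge d$; if $|C|\ge d+1$ we may adjoin any further vertex, so assume $|C|=d$. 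As $k>d$ there is a colour $c^*\notin C$, and since the colour-$c^*$ class is spanning and connected some $t\in T$ lies on a $c^*$-coloured edge $tw$; then $w\notin T$ (otherwise $c^*\in C$), so $T\cup\{w\}$ spans a $K_{d+1}$ whose colour set contains $C\cup\{c^*\}$, of size $d+1$. This half is routine, with Gallai's theorem the only ingredient.

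For the second assertion I would generalise the paths colouring from the Introduction. The point is this: if the union of the ``sparse'' colour classes has girth greater than $d$, then on any $d$ vertices it contains fewer than $d$ edges --- a graph on $d$ vertices with at least $d$ edges contains a cycle, necessarily of length at most $d$ --- and hence uses at most $d-1$ colours there, so the whole $K_d$ uses at most $(d-1)+1=d$ colours. Concretely, I would fix $d$ (we may assume $d<k$, the case $d=k$ being vacuous) and, for $n$ large, take a bounded-degree graph $G$ on $[n]$ with girth greater than $d$ and edge-connectivity at least $2(k-1)$; inside $G$, choose $k-1$ edge-disjoint spanning trees $T_1,\dots,T_{k-1}$ (possible since a $2(k-1)$-edge-connected graph contains $k-1$ edge-disjoint spanning trees, by the Nash--Williams--Tutte theorem); colour the edges of $T_i$ with colour $i$ and every remaining edge of $K_n$ with colour $k$. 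Each $T_i$ is connected and spanning; the colour-$k$ class contains the complement of $G$, which for $n$ large has minimum degree at least $n/2$ and is therefore connected and spanning; and the colour count on any $d$-set is at most $d$ by the girth argument, since $\bigcup_i T_i\subseteq G$.

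I expect the main obstacle to be the existence of the graph $G$: one needs girth greater than $d$, which is a purely local condition, together with enough global connectivity to accommodate $k-1$ edge-disjoint spanning trees, and this for every sufficiently large $n$. The cleanest route is to invoke a standard high-girth bounded-degree expander (or a random regular graph) of degree at least $2(k-1)$, which one can take to be $2(k-1)$-edge-connected; alternatively one can build $G$ by starting from a fixed such graph and repeatedly adjoining a new vertex joined to $2(k-1)$ old vertices that are pairwise at distance greater than $d$ in the current graph, which is possible since balls of radius $d$ in a bounded-degree graph have bounded size. Once $G$ is in hand, the verification of connectedness of the colour classes and of the colour bound on every $K_d$ is immediate.
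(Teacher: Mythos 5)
Your proof is correct, and both halves follow the paper's line of argument: the first part is the same induction (Gallai for $d=3$, then repeatedly extend using the fact that every vertex meets every colour, since each class is spanning), and the second part rests on the identical counting step, namely that if the union of the $k-1$ sparse colour classes has girth greater than $d$ then any $d$ vertices carry at most $d-1$ of its edges and hence at most $d$ colours in all. The only real difference is how you realise the sparse classes: the paper simply takes $k-1$ edge-disjoint spanning (Hamilton) paths whose union contains no cycle of length at most $d$, whereas you take $k-1$ edge-disjoint spanning trees, extracted via Nash--Williams--Tutte from a bounded-degree graph of girth greater than $d$ and edge-connectivity $2(k-1)$. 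Your route trades one existence assertion for another: the paper leaves the existence of such a family of paths to the reader, while you must supply a high-girth, highly edge-connected, bounded-degree graph on every sufficiently large vertex set (and, in your incremental construction, take a little care that attaching new vertices of degree $2(k-1)$ does not inflate the degrees of old vertices unboundedly). Both are standard and both work; the paper's version is the more elementary, yours makes the girth/connectivity trade-off explicit and is arguably easier to verify rigorously once the graph $G$ is in hand.
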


\begin{proof}
As above, the first statement is a simple consequence from Gallai's theorem plus the fact that every vertex is incident with edges of all colours.
\\\\
The latter statement is trivially true for $d=k$. For $d<k$, we can take $k-1$ disjoint paths on $n$ vertices such that the union of them contains no cycles of length at most $d$ and give the paths colouring as the one mentioned in the introduction, that is, colour each of the spanning paths by a different colour and the rest of the edges by another colour, say green. Suppose there is a $K_d$ that spans $d+1$ colours, then there are at least $d$ non-green edges on these $d$ vertices, which implies that there is a cycle of length at most $d$ from the union of these paths, contradicting the assumption.
\end{proof}

\noindent
Until now we have focused on graphs and 3-uniform hypergraphs, but it is natural to seek extensions to the case of general $r$-uniform hypergraphs. As before, we say that an $r$-graph is \emph{connected} if there is a strong path between every pair of $(r-1)$-sets. Here, a strong path is a sequence of $r$-edges where each consecutive pair of $r$-edges has intersection size precisely $r-1$. Again, we say a coloured $K_n^{(r)}$ is \emph{connected} if each colour class spans a connected subgraphs. It appears that the interesting case is still for 3 colours.

\begin{conjecture}
For all sufficiently large $n$, if we connectedly $3$-colour the edges of the complete $r$-graph on $n$ vertices, then there must exist an $(r+1)$-set that uses all three colours. \qed
\end{conjecture}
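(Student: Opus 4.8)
\medskip
\noindent
We do not have a proof, but the approach we would pursue is induction on the uniformity $r$, with the base case $r=2$ being exactly Gallai's theorem. For the inductive step, suppose $c$ is a connected $3$-colouring of $K_n^{(r)}$, write $C_i$ for the class of colour $i$, and for a vertex $v$ let $c_v$ be the induced $3$-colouring of $K_{n-1}^{(r-1)}$ on $[n]\setminus\{v\}$ given by $c_v(S)=c(S\cup\{v\})$. The key observation is that if $T$ is an $r$-set in $[n]\setminus\{v\}$ that uses all three colours under $c_v$, then $T\cup\{v\}$ uses all three colours under $c$: its $r$-subsets containing $v$ are exactly the sets $(T\setminus\{t\})\cup\{v\}$ for $t\in T$, and these receive precisely the three colours that the $(r-1)$-subsets of $T$ receive under $c_v$. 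So it would suffice to find a vertex $v$ for which $c_v$ is again connected, that is, for which each of the three $(r-1)$-graphs $\{S : S\cup\{v\}\in C_i\}$ is connected, and then apply the inductive hypothesis. (The case $r=3$ of the conjecture is the preceding one.)

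\noindent
The main obstacle is that this last step can fail: links of connected hypergraphs need not be connected, and the link of a vertex in a connected colour class need not even be a covering. So the induction does not run naively, and there seem to be two ways to try to force it through. The first is to strengthen the statement, replacing ``each colour class is connected'' by some weaker hypothesis that still forces a trichromatic $(r+1)$-set but is inherited by enough of the links; identifying the correct such hypothesis is, we expect, the crux of the whole problem, and it is far from clear what it should be. A cleaner variant is to collapse from $r$ to $2$ in a single step: for an $(r-2)$-set $W$, a rainbow triangle $T$ in the pair-colouring $\{u,w\}\mapsto c(W\cup\{u,w\})$ of the complete graph on $[n]\setminus W$ yields a trichromatic $(r+1)$-set $W\cup T$, so it would be enough to produce a single $(r-2)$-set $W$ whose three link pair-graphs $\{\{u,w\} : W\cup\{u,w\}\in C_i\}$ are all connected and spanning, and then quote Gallai's theorem. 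The question then becomes whether connectedness of the $C_i$, together with the absence of a trichromatic $(r+1)$-set, forces such a $W$ to exist; we expect this is where the real work lies, and plausibly where the hypothesis that $n$ is large enters.

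\noindent
The alternative route is structural, modelled on the proof of Gallai's theorem via the Gallai structure theorem: a rainbow-triangle-free edge-colouring of $K_m$ with $m\ge 2$ is obtained by substituting smaller rainbow-triangle-free colourings into a $2$-edge-coloured $K_t$ with $t\ge 2$, and since one of the three colours is then absent from the top level it is confined to the blocks, hence disconnected, contradicting spanning connectedness. We would try to prove an analogue for a $3$-colouring of $K_n^{(r)}$ with no trichromatic $(r+1)$-set: a partition $V=V_1\cup\cdots\cup V_t$ with $t\ge 2$ for which the colour of every $r$-set meeting at least two parts is governed by a ``reduced'' colouring on the parts using at most two colours, so that some colour never appears on an $r$-set meeting two parts and is therefore disconnected, again a contradiction. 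For $r=3$ this ought to be accessible by a careful analysis of the constraints that the colours of the four $3$-subsets of a $4$-set place on one another, bootstrapped to a global partition. The obstacle here is that hypergraphs have no clean ``blow up an edge'' operation, so even formulating the correct structure theorem, let alone proving it and then lifting it from $r=3$ to general $r$ (presumably by a further induction on $r$ applied to links), is likely to be delicate.
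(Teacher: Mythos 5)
This statement is left as an open conjecture in the paper: the authors give no proof, and indeed the \qed symbol merely closes the conjecture environment. You correctly recognise this and do not claim a proof, so there is nothing to compare against; what you offer is a survey of plausible attack routes, and as such it is sound. Your two lifting observations are both correct: a trichromatic $r$-set in the link colouring $c_v$ does lift to a trichromatic $(r+1)$-set containing $v$, and a rainbow triangle in the pair-colouring induced by an $(r-2)$-set $W$ does yield a trichromatic $(r+1)$-set $W\cup T$. You also correctly locate the obstruction in both cases, namely that connectedness of a colour class is not inherited by its links.

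One point worth adding: the paper's own counterexamples (Lemma~\ref{notricoloured} for $r=3$, and the covering $3$-colouring of $K_n^{(4)}$ in the final section) show that the weaker ``covering'' hypothesis does not suffice even to force a trichromatic $(r+1)$-set. So any successful version of your second route must extract from strong-path connectedness of the $C_i$ strictly more than the fact that every $(r-1)$-set lies in an edge of every colour; in particular, the existence of an $(r-2)$-set $W$ with all three link pair-graphs connected cannot follow from covering alone, and this is exactly where the real difficulty sits, as you suspect. Your third, structural route faces the additional problem that no Gallai-type decomposition theorem is known for rainbow-$(r+1)$-set-free colourings of complete $r$-graphs, and the paper's Theorem~\ref{norainbow} (a connected $k$-colouring of $K_n^{(3)}$ with no multicoloured $4$-set) indicates that any such structure theorem would have to be compatible with quite rich blow-up constructions. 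In short: no gap in what you assert, but also no proof, which matches the status of the statement in the paper.
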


\noindent
A slightly weaker notion would be to use covering, where we say an $r$-graph is a \emph{covering} if every $(r-1)$-set is in some $r$-edge. We say a colouring of the complete $r$-graph is \emph{covering} if each colour class spans a covering. 
\\\\
Unfortunately, as with 3-graphs (Lemma~\ref{notricoloured}), it is again not true that every weakly connected 3-colouring of a complete 4-graph contains a 5-set that uses all three colours.

\begin{lemma}
For all sufficiently large $n$, there is a covering 3-colouring of $K_n^{(4)}$ with no 5-set that uses all three colours.
\end{lemma}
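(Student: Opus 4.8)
The plan is to run the same kind of inductive blow-up that is used for Lemma~\ref{notricoloured}. Since only three colours are needed, I would induct on the number of colours, starting from a covering $2$-colouring of $K_m^{(4)}$ — for instance, colour a $4$-set by the parity of the sum of its elements, which is clearly covering and, using only two colours, has no $5$-set with three colours. The inductive step should take a covering $k$-colouring $c$ of $K_m^{(4)}$ in which no $5$-set uses three colours and produce from it a covering $(k+1)$-colouring $c'$ of $K_N^{(4)}$, for a suitable $N$, again with no $5$-set using three colours, where colour $k+1$ is the new colour; applying this once, with $k=2$, gives the lemma.

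For the blow-up I would partition $V(K_N^{(4)})$ into $m$ blocks and colour a $4$-edge according to how its four vertices distribute among the blocks. Exactly as in Lemma~\ref{notricoloured}, a $4$-edge meeting four distinct blocks should inherit the colour $c$ gives to those four block-indices, and a $4$-edge inside a single block should inherit $c$ applied to the in-block coordinates, so each block carries a faithful copy of $c$. The new difficulty, which is why the $3$-uniform proof does not transcribe verbatim, is that for $4$-graphs there are three ``intermediate'' block-patterns, $2+1+1$, $2+2$ and $3+1$, instead of the single pattern $2+1$. Painting every intermediate $4$-edge with the new colour $k+1$ — the naive analogue of Lemma~\ref{notricoloured} — destroys covering, because a $3$-set with two vertices in one block lies only in $4$-edges of intermediate pattern and so would never meet a $4$-edge of an old colour. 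To repair this I would give each vertex a second blow-up coordinate too, colouring a $4$-edge whose block-indices repeat but whose second coordinates are spread out by $c$ applied to those second coordinates, and reserving colour $k+1$ only for the $4$-edges that are ``degenerate at every level''; the idea is that a $3$-set all of whose extensions remain degenerate at every level must have two vertices agreeing in so many coordinates that one can still route some of its extensions through the non-degenerate branches and pick up the old colours.

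With such a rule in hand the proof would proceed in two stages. First, one checks $c'$ is well defined and every colour class is covering, by classifying a $3$-set according to its block (and second-coordinate) pattern and, in each case, exhibiting for each of the $k+1$ colours a fourth vertex completing it to a $4$-edge of that colour. Second, one checks that no $5$-set uses three colours, by running through the partitions of $5$ and verifying pattern by pattern that its five $4$-subsets realise at most two colours, using that $c$ itself has no three-coloured $5$-set and that many $4$-subsets of a given $5$-set share a block-pattern, hence a colour. The hard part will be getting these two stages to hold at once: the colour rule on the intermediate and degenerate configurations has to be chosen delicately enough that all colour classes stay covering while no $5$-set is permitted to acquire a third colour, and I expect the correct rule is essentially forced once one identifies the ``dangerous'' $5$-set patterns — those whose $4$-subsets span three different patterns — and insists the rule collapse two of the three colours on each of them.
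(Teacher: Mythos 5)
There is a genuine gap: you correctly identify the obstacle (the three intermediate block-patterns $2{+}1{+}1$, $2{+}2$ and $3{+}1$, and the fact that painting them all with the new colour destroys covering), but you never actually specify a colouring rule that works, and the repair you sketch does not. If an intermediate $4$-edge with spread-out second coordinates is coloured by $c$ applied to those coordinates, then a $5$-set of block pattern $2{+}1{+}1{+}1$ already goes wrong: its two all-distinct $4$-subsets both receive the single ``outer'' colour of the four block indices, while its three $4$-subsets of pattern $2{+}1{+}1$ receive inner colours that need bear no relation to that outer colour (nor to each other), so three colours can appear even at the very first step from $2$ colours to $3$. More fundamentally, your framework of ``induct on the number of colours, reserving colour $k+1$ for the fully degenerate edges,'' in the style of Lemma~\ref{notricoloured}, is exactly the generalisation the authors remark they cannot make work: the construction really does hinge on there being exactly three colours and exactly three intermediate patterns.

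The paper's actual proof is non-inductive and uses a different key idea: two covering $2$-colourings with \emph{overlapping} palettes. Take a covering red/blue colouring $c$ and a covering blue/green colouring $d$ of $K_n^{(4)}$. Blow up into $n$ blocks and colour a $4$-edge by $d$ on the block indices if the four blocks are distinct, by $c$ on the within-block indices if all four vertices share a block, and otherwise give pattern $2{+}1{+}1$ red, pattern $2{+}2$ blue, and pattern $3{+}1$ green. Covering is then immediate: a $3$-set of block pattern $2{+}1$ extends to all three intermediate patterns and hence to all three colours, while the patterns $1{+}1{+}1$ and $3$ use the covering property of $d$ or $c$ together with one intermediate pattern. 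For the $5$-set condition one runs through the seven partitions of $5$: a $5$-set of pattern $2{+}1{+}1{+}1$ sees only red and the single colour $d$ assigns to its four block indices; pattern $4{+}1$ sees only green and the single colour $c$ gives its in-block $4$-subset; pattern $3{+}2$ sees only blue and green; and so on. The whole point is that the colour given to each intermediate pattern is matched against the restricted palettes of $c$ and $d$ so that every partition of $5$ realises at most two colours — precisely the delicate choice your proposal leaves unmade.
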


\begin{proof}
Suppose $c$ is a covering red/blue colouring of $K_n^{(4)}$ and $d$ is a covering blue/green colouring of $K_n^{(4)}$.
\\\\
Let $V\big(K_{n^2}^{(3)}\big) = V_1 \cup V_2 \cup \ldots \cup V_n$, where $V_i=\{v_{ij}:1\leq j \leq n\}$. We can view this as the blow-up of $K_n^{(4)}$ of colouring $d$ with $n$ copies of $K_n^{(4)}$ of colouring $c$. There are three other different types of 4-edges to be coloured. Formally, we define the 3-colouring $c'$ as follows.

\begin{equation*}
c'(v_{ix}v_{jy}v_{pz}v_{qt}) = 
\begin{cases}
d(ijpq) & \text{ if $i,j,p,q$ all distinct},\\
c(xyzt) & \text{ if $i=j=p=q$},\\
red & \text{ if $\big|\{i,j,p,q\}\big|=3$},\\
blue & \text{ if $i=j,p=q,i\ne p$},\\
green & \text { if $i=j=p,q\ne i$}.
\end{cases}
\end{equation*}
It is now straightforward to check that $c'$ is in fact a covering 3-colouring of $K_{n^2}^{(4)}$ without any $K_5^{(4)}$ that uses all three colours.
\end{proof}

\noindent
It seems that the above inductive colouring works because we are lucky to have exactly three colours, namely one to colour each of the three extra types of 4-edges to maintain the connectivity of the blow-up $K_{n^2}^{(4)}$. In fact, we do not see how to generalise this to greater values of $r$, even when we are allowed to use more colours.
\\\\
Finally, returning to Theorem~\ref{norainbow}, it would be interesting to know what happens if the notion of connectedness is strengthened to some topological notion of connectedness (to do with the simplicial complex formed by the triples in each colour class): this is an idea of Thomass\'{e}~\cite{thomasse}.

\end{document}